\newtheorem{theorem}{Theorem}[section]
\newtheorem{lemma}[theorem]{Lemma}
\newtheorem{corollary}[theorem]{Corollary}
\newtheorem{remark}[theorem]{Remark}
\newtheorem{definition}[theorem]{Definition}
\newcommand{\bi}{\begin{itemize}}
\newcommand{\ei}{\end{itemize}}
\newcommand{\ba}{\begin{array}}
\newcommand{\ea}{\end{array}}
\begin{document}

\title{\textbf{Universal nonmonotone line search method for nonconvex multiobjective optimization problems with convex constraints}}


 \author{
 Maria Eduarda Pinheiro  \thanks{Trier University, Department of Mathematics, Universit\"{a}tsring 15, Trier, 54296, Trier, Germany. E-mail: pinheiro@uni-trier.de} \and  Geovani Nunes Grapiglia \thanks{Universit\'e catholique de Louvain, ICTEAM/INMA, Avenue Georges Lema\^itre, 4-6/ L4.05.01, B-1348, Louvain-la-Neuve, Belgium. E-mail: geovani.grapiglia@uclouvain.be} }

\date{November 14, 2024}

\maketitle

\begin{abstract}
In this work we propose a general nonmonotone line-search method for nonconvex multi\-objective optimization problems with convex constraints. At the $k$th iteration, the degree of nonmonotonicity is controlled by a vector $\nu_{k}$ with nonnegative components. Different choices for $\nu_{k}$ lead to different nonmonotone step-size rules. Assuming that the sequence $\left\{\nu_{k}\right\}_{k\geq 0}$ is summable, and that the $i$th objective function has Hölder continuous gradient with smoothness parameter $\theta_i \in(0,1]$, we show that the proposed method takes no more than $\mathcal{O}\left(\epsilon^{-\left(1+\frac{1}{\theta_{\min}}\right)}\right)$ iterations to find a $\epsilon$-approximate Pareto critical point for a problem with $m$ objectives and  $\theta_{\min}= \min_{i=1,\dots, m} \{\theta_i\}$. In particular, this complexity bound applies to the methods proposed by Drummond and Iusem (Comput. Optim. Appl. 28: 5--29, 2004), by Fazzio and Schuverdt (Optim. Lett. 13: 1365--1379, 2019), and by Mita, Fukuda and Yamashita (J. Glob. Optim. 75: 63--90, 2019).  The generality of our approach also allows the development of new methods for multiobjective optimization. As an example, we propose a new nonmonotone step-size rule inspired by the Metropolis criterion. Preliminary numerical results illustrate the benefit of nonmonotone line searches and suggest that our new rule is particularly suitable for multiobjective problems in which at least one of the objectives has many non-global local minimizers.
\\[2mm]
{\bf Keywords:} Multiobjective optimization; Projected gradient methods; Nonmonotone line searches; Worst-case complexity
\end{abstract}

\section{Introduction}
\subsection*{Motivation}
In this work, we consider multiobjective optimization problems with convex constraints. This type
of problem appears in several important applications, such as seismic design of
buildings \cite{32}, planetary exploration \cite{11,40}, maintenance of civil infrastructures \cite{17}, planning of cancer treatment \cite{45,12} and  
drug design \cite{39,31}. In most cases, there is not a single point which minimizes all the objective functions at once. This fact
motivates the notion of Pareto efficiency. Roughly speaking, a point is
said to be a Pareto efficient solution when from this point it is impossible to obtain
an improvement in any of the objective functions without worsening the value of some
other objective.

There are many approaches to solve multiobjective optimization problems \cite{36,33}. In recent years, several methods have been obtained by extending well-known optimization algorithms for single-objective optimization. Notable 
examples include the steepest descent method proposed in \cite{Fliege}, projected
gradient methods \cite{Drummond,18,19,5}, the Newton's method proposed in \cite{Fliege2}, proximal-point methods \cite{4,1}, trust-region methods \cite{Villacorta,9}, and also nonmonotone line search methods \cite{FS,MITA}. 

In the present work, we extend the general nonmonotone method proposed in \cite{ss} for single-objective optimization to multiobjective optimization. The method is said to be nonmonotone because it permits an increase in the objectives between consecutive iterations. At the $k$th iteration, the extent of the increase allowed is governed by a vector $\nu_{k}$ with nonnegative components. Various selections for $\nu_{k}$ result in different nonmonotone step-size rules. Specifically, considering suitable choices for $\nu_{k}$, our method encompasses instances of the methods proposed in \cite{Drummond,FS,MITA}. Assuming that the sequence $\left\{\nu_{k}\right\}_{k\geq 0}$ is summable, and that the $i$th objective function has Hölder continuous gradient with constant $H_{i}$ and smoothness parameter $\theta_i \in(0,1]$, we show that the proposed method takes no more than $\mathcal{O}\left(\epsilon^{-\left(1+\frac{1}{\theta_{\min}}\right)}\right)$ iterations to find a $\epsilon$-approximate Pareto critical point of a problem with $m$ objectives and  $\theta_{\min}= \min_{i=1,\dots, m} \{\theta_i\}$. For the case $m=1$, this bound agrees in order with the bound proved in \cite{Maryam} for a gradient method that requires the knowledge of $H_{1}$ and $\theta_{1}$. Since our method is fully adaptive with respect to the H\"{o}lder constants, we say that it is a \textit{universal method} in the sense of Nesterov \cite{Nesterov}. For the case in which $\theta_{1}=\ldots=\theta_{m}=\theta$, i.e., all the objective functions have H\"{o}lder continuous gradients \textit{with the same smoothness level $\theta$}, our complexity bound agrees in order with the bound established by \cite{mar} for the first-order version of their $p$th-order method. In addition, when all the objectives have Lipschitz continuous gradients (case $\theta_{1}=\ldots=\theta_{m}=1$), our bound becomes of $\mathcal{O}\left(\epsilon^{-2}\right)$ which agrees in order with the bounds established in \cite{24} for a multiobjective trust-region method, and in \cite{Fliege3} for the multiobjective steepest descent method. Under the weaker assumption that $\left\{\nu_{k}\right\}_{k\geq 0}$ converges to zero, we also prove a liminf-type global convergence result for our method.  The generality of our results regarding possible choices of the sequence $\left\{\nu_{k}\right\}_{k\geq 0}$ enables the development of new nonmonotone methods with convergence and worst-case complexity guarantees for multiobjective optimization. As an example, we propose a new Metropolis-based nonmonotone step-size rule inspired by a method recently proposed in \cite{GeovaniSachs} for single-objective optimization.

\subsection*{Contents}
This paper is organized as follows. In Section \ref{sec1}, we define the problem and review some results about multiobjective optimization. In Section \ref{sec2} we present our general nonmonotone method and establish its complexity and convergence properties. In Section \ref{sec3}, we analyse some particular instances of the method. In Section \ref{sec4} we propose a new nonmonotone method that fits into our general scheme. Finally, in Section \ref{sec5}, we report some illustrative numerical results.

\subsection*{Notations}

In what follows, $\|\,.\,\|$ denotes the Euclidean norm, $I=\left\{1,2,\ldots,m\right\}$, $\mathbb{R}_{+}^{m}~=~\left\{z\in\mathbb{R}^{m}\,|\,z_{i}\geq 0,\,i\in I\right\}$, $\mathbb{R}_{++}^{m}=\left\{z\in\mathbb{R}^{m}\,|\,z_{i}>0,\,i\in I\right\}$ and, given $x\in \Omega$, we consider the set $\Omega-x=\left\{y\in\mathbb{R}^{n}\,|\,y+x\in\Omega\right\}$. In addition, $J_{F}(x)$ will denote the Jacobian of $F:\mathbb{R}^{n}\to\mathbb{R}^{m}$ at $x$. The relations $\succ$ and $\succ_{\omega}$ are given respectively by
\begin{equation*}
y\succ x\Longleftrightarrow y-x\in\mathbb{R}_{+}^{m}\,\,\text{and}\,\,y\succ_{\omega}x\Longleftrightarrow y-x\in\mathbb{R}_{++}^{m}.
\end{equation*}
Moreover, given a symmetric matrix $B\in\mathbb{R}^{n\times n}$, $\lambda_{\min}(B)$ and $\lambda_{\max}(B)$ will denote the smallest and the largest eigenvalues of $B$. Finally, given a finite set $A$, $|A|$ denotes the cardinality of $A$.

\section{Problem Definition and Auxiliary Results}\label{sec1}

In this paper we consider methods for solving the following multiobjective optimization problem
\begin{eqnarray}
\min& F(x)=(f_{1}(x),\ldots,f_{m}(x))^{T},\label{eq:2.1}\\
\text{s.t.}& x\in \Omega,\label{eq:2.2}
\end{eqnarray}
where $\emptyset\neq\Omega\subset\mathbb{R}^{n}$ is a closed and convex set and $F:\mathbb{R}^{n}\to\mathbb{R}^{m}$ is a continuously differentiable function, possibly nonconvex. Let us start by recalling the definitions of efficient and (local) weakly efficient solutions of (\ref{eq:2.1})-(\ref{eq:2.2}).
\vspace{0.2cm}
\begin{definition}
    [\cite{Guerraggio}, page 619] Given a point $x^{*}\in\Omega$,
\begin{itemize}
\item[(a)] $x^{*}$ is said to be a Pareto efficient solution of (\ref{eq:2.1})-(\ref{eq:2.2}) when there is no $y\in\Omega$ such that $F(x^{*})\succ F(y)$ and $F(y)\neq F(x^{*})$;
\item[(b)] $x^{*}$ is said to be a weakly Pareto efficient solution of (\ref{eq:2.1})-(\ref{eq:2.2}) when there is no $y\in\Omega$ such that $F(x^{*})\succ_{w} F(y)$; and
\item[(c)] $x^{*}$ is said to be a local (or local weakly) Pareto efficient solution of (\ref{eq:2.1})-(\ref{eq:2.2}) when there exists a neighborhood $N(x^{*})$ of $x^{*}$ for which there is no $y\in N(x^{*})\cap\Omega$ such that $F(x^{*})\succ F(y)$ and $F(y)\neq F(x^{*})$ (or, respectively, $F(x^{*})\succ_{w} F(y)$).
\end{itemize}
\end{definition}
\noindent The lemma below gives a necessary condition for a point $x^{*}\in\Omega$ to be a local weakly efficient solution of (\ref{eq:2.1})-(\ref{eq:2.2}).
\vspace{0.2cm}
\begin{lemma}
\label{lem:extra1}
Let $F:\mathbb{R}^{n}\to\mathbb{R}^{m}$ be a continuously differentiable function and $\Omega\subset\mathbb{R}^{n}$ be a closed and convex set. If $x^{*}\in\Omega$ is a local weakly efficient solution of (\ref{eq:2.1})-(\ref{eq:2.2}), then
\begin{equation}
-\mathbb{R}_{++}^{m}\cap\left\{J_{F}(x^{*})(x-x^{*})\,|\,x\in\Omega\right\}=\emptyset.
\label{eq:2.3}
\end{equation}
\end{lemma}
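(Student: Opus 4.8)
The plan is to argue by contradiction, using the first-order characterization of weak Pareto efficiency together with a Taylor expansion along a feasible direction. Suppose \eqref{eq:2.3} fails, so there exists $\bar{x}\in\Omega$ with $J_{F}(x^{*})(\bar{x}-x^{*})\in-\mathbb{R}_{++}^{m}$; that is, $\nabla f_{i}(x^{*})^{T}(\bar{x}-x^{*})<0$ for every $i\in I$. Set $d=\bar{x}-x^{*}$. Since $\Omega$ is convex, the segment $x^{*}+td=(1-t)x^{*}+t\bar{x}$ lies in $\Omega$ for all $t\in[0,1]$, so $d$ is a feasible direction at $x^{*}$.

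Next I would expand each objective along this segment. By continuous differentiability, for each $i\in I$ we have
\begin{equation*}
f_{i}(x^{*}+td)=f_{i}(x^{*})+t\,\nabla f_{i}(x^{*})^{T}d+o(t)\quad\text{as }t\to 0^{+}.
\end{equation*}
Because $\nabla f_{i}(x^{*})^{T}d<0$, there is $t_{i}>0$ such that $f_{i}(x^{*}+td)<f_{i}(x^{*})$ for all $t\in(0,t_{i})$. Taking $\bar{t}=\min_{i\in I}t_{i}>0$ (a minimum over the finite set $I$, hence positive), we obtain $f_{i}(x^{*}+td)<f_{i}(x^{*})$ simultaneously for all $i\in I$ whenever $t\in(0,\bar{t})$.

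Finally I would convert this into a contradiction with local weak efficiency. Let $N(x^{*})$ be any neighborhood of $x^{*}$; choosing $t\in(0,\bar{t})$ small enough that $y:=x^{*}+td\in N(x^{*})$, we have $y\in N(x^{*})\cap\Omega$ and $F(x^{*})-F(y)\in\mathbb{R}_{++}^{m}$, i.e. $F(x^{*})\succ_{w}F(y)$. This contradicts the assumption that $x^{*}$ is a local weakly efficient solution, completing the proof. The only point requiring a little care — and the one I would present explicitly — is the passage from the $m$ individual thresholds $t_{i}$ to a single uniform threshold $\bar{t}$; this is harmless here precisely because $m$ is finite, but it is the step that makes the $o(t)$ estimates usable across all objectives at once. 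Everything else is a routine first-order Taylor argument.
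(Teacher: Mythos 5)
Your argument is correct. Note, however, that the paper does not actually prove this lemma: its ``proof'' is a one-line citation to Theorem 5.1 (item (ii)-(a)) of Guerraggio et al., so there is no internal argument to compare against. What you supply is the standard self-contained first-order proof, and every step checks out: the failure of \eqref{eq:2.3} yields $\bar{x}\in\Omega$ with $\nabla f_{i}(x^{*})^{T}(\bar{x}-x^{*})<0$ for all $i\in I$; convexity of $\Omega$ makes $d=\bar{x}-x^{*}$ a feasible direction along the whole segment; the expansion $f_{i}(x^{*}+td)=f_{i}(x^{*})+t\,\nabla f_{i}(x^{*})^{T}d+o(t)$ needs only differentiability at $x^{*}$; and taking the minimum of the finitely many thresholds $t_{i}$ produces a point $y\in N(x^{*})\cap\Omega$ with $F(x^{*})\succ_{\omega}F(y)$, contradicting local weak efficiency exactly as defined in Definition 2.1(c). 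You are right to flag the uniformization over $i$ as the only step needing care, and finiteness of $I$ disposes of it. The one cosmetic remark is that you should apply the argument to \emph{the} neighborhood $N(x^{*})$ furnished by the definition of local weak efficiency rather than to an arbitrary one, but since your construction produces admissible $y$ inside every neighborhood, the contradiction is obtained in particular for that one. The trade-off between the two routes is the usual one: the citation keeps the paper short and covers the more general cone-ordered setting treated in the reference, while your direct argument is elementary, self-contained, and uses nothing beyond convexity of $\Omega$ and differentiability of $F$.
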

\begin{proof}
See Theorem 5.1 (item (ii)-(a)) in  \cite{Guerraggio}.
\end{proof}
\noindent Lemma \ref{lem:extra1} motivates the following definition.
\vspace{0.2cm}
\begin{definition} A point $x^{*}\in\Omega$ is said to be a Pareto critical point for (\ref{eq:2.1})-(\ref{eq:2.2}) if it satisfies condition (\ref{eq:2.3}).
\end{definition}
\vspace{0.2cm}
\begin{remark}
Note that when $m=1$ and $\Omega=\mathbb{R}^{n}$, (\ref{eq:2.1})-(\ref{eq:2.2}) reduces to the unconstrained scalar optimization problem, and the Pareto criticality condition (\ref{eq:2.3}) is equivalent to the classical stationarity condition  $\nabla f(x)=0$.
\end{remark}
\vspace{0.2cm}
Let $x\in\Omega$ be a point which is not Pareto critical. Then, there exists a direction $d\in\Omega-x$ such that
\begin{equation*}
J_{F}(x)d\in -\mathbb{R}_{++}^{m},
\end{equation*}
that is,
\begin{equation*}
J_{F}(x)d\prec_{w} 0.
\end{equation*}
In this case, $d$ is called a descent direction for $F$ at $x$. According to the next lemma, a descent direction of $F$ at $x$ can be obtained by solving the following problem:
\begin{equation}
\min_{d\in \Omega-x}h_{x}(d)\equiv \max_{i\in I}\left\{\nabla f_{i}(x)^{T}d\right\}+\dfrac{\|d\|^{2}}{2}.
\label{eq:2.4}
\end{equation}
\vspace{0.2cm}
\begin{lemma}
\label{lem:2.1}
The following statements hold:
\begin{itemize}
\item[(a)] The subproblem (\ref{eq:2.4}) has only one solution.
\item[(b)] If $x$ is a Pareto critical point of $F$ and $s(x)=\arg\min_{d\in \Omega-x} h_{x}(d)$, then $s(x)=0$ and consequently $h_{x}(s(x))=0$.
\item[(c)] If $x\in\Omega$ is not a Pareto critical point of $F$, then $s(x)\neq 0$ and $h_{x}(s(x))<0$. In particular, $s(x)$ is a descent direction for $F$ at $x$.
\item[(d)] The mapping $x\longmapsto s(x)$ is continuous.
\end{itemize}
\end{lemma}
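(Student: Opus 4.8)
The plan is to settle (a)--(c) by elementary convex analysis and then concentrate on the continuity statement (d), which is the only delicate point.

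For (a), note that $d\mapsto\max_{i\in I}\nabla f_{i}(x)^{T}d$ is convex, being a pointwise maximum of affine functions, so $h_{x}$ is the sum of a convex function and the $1$-strongly convex map $d\mapsto\|d\|^{2}/2$; hence $h_{x}$ is $1$-strongly convex, in particular strictly convex and coercive. Since $\Omega-x$ is nonempty (it contains $0$ because $x\in\Omega$), closed and convex, $h_{x}$ attains its infimum over $\Omega-x$ at a unique point, which is (a).

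For (b) and (c) the pivotal observation is that $h_{x}(0)=0$, so $h_{x}(s(x))\le 0$ always, combined with the following implication: if $d\in\Omega-x$ satisfies $h_{x}(d)<0$, then $d\neq 0$ and $\max_{i\in I}\nabla f_{i}(x)^{T}d\le h_{x}(d)-\|d\|^{2}/2<0$, i.e. $J_{F}(x)d\in-\mathbb{R}_{++}^{m}$; since $d=(x+d)-x$ with $x+d\in\Omega$, this contradicts the Pareto criticality condition (\ref{eq:2.3}). Consequently, if $x$ is Pareto critical then $h_{x}\ge 0$ on $\Omega-x$, so $s(x)=0$ by the uniqueness in (a), proving (b). If instead $x$ is not Pareto critical, then (\ref{eq:2.3}) fails, so there is $\bar{y}\in\Omega$ with $J_{F}(x)(\bar{y}-x)\in-\mathbb{R}_{++}^{m}$; writing $\bar{d}=\bar{y}-x$ and $c=-\max_{i\in I}\nabla f_{i}(x)^{T}\bar{d}>0$, convexity of $\Omega-x$ gives $t\bar{d}\in\Omega-x$ for $t\in(0,1]$ and $h_{x}(t\bar{d})=-ct+t^{2}\|\bar{d}\|^{2}/2<0$ once $t$ is small. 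Hence $h_{x}(s(x))<0=h_{x}(0)$, which excludes $s(x)=0$, and then $\max_{i\in I}\nabla f_{i}(x)^{T}s(x)\le h_{x}(s(x))<0$ shows that $s(x)$ is a descent direction, giving (c).

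For (d), the obstacle is that the feasible set $\Omega-x$ moves with $x$, so I would argue by subsequences. Fix $x\in\Omega$ and $\{x_{k}\}\subset\Omega$ with $x_{k}\to x$. Since $0\in\Omega-x_{k}$, we have $h_{x_{k}}(s(x_{k}))\le h_{x_{k}}(0)=0$, whence $\tfrac{1}{2}\|s(x_{k})\|^{2}\le-\max_{i\in I}\nabla f_{i}(x_{k})^{T}s(x_{k})\le\big(\max_{i\in I}\|\nabla f_{i}(x_{k})\|\big)\|s(x_{k})\|$, so $\|s(x_{k})\|\le 2\max_{i\in I}\|\nabla f_{i}(x_{k})\|$, which is bounded because the $\nabla f_{i}$ are continuous and $x_{k}\to x$. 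Thus $\{s(x_{k})\}$ is bounded; let $s(x_{k_{j}})\to\bar{s}$ along a subsequence. Closedness of $\Omega$ and $x_{k_{j}}+s(x_{k_{j}})\in\Omega$ give $\bar{s}\in\Omega-x$. To see that $\bar{s}$ minimizes $h_{x}$ over $\Omega-x$, take any $d\in\Omega-x$ and set $d_{j}=P_{\Omega}(x_{k_{j}}+d)-x_{k_{j}}$, where $P_{\Omega}$ is the Euclidean projection onto $\Omega$; then $d_{j}\in\Omega-x_{k_{j}}$, and since $P_{\Omega}$ is continuous (indeed nonexpansive, as $\Omega$ is closed and convex) and $x+d\in\Omega$, we get $d_{j}\to d$. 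Optimality of $s(x_{k_{j}})$ yields $h_{x_{k_{j}}}(s(x_{k_{j}}))\le h_{x_{k_{j}}}(d_{j})$; passing to the limit, using continuity of the $\nabla f_{i}$ and of the finite maximum, gives $h_{x}(\bar{s})\le h_{x}(d)$. By uniqueness in (a), $\bar{s}=s(x)$. Since every convergent subsequence of the bounded sequence $\{s(x_{k})\}$ converges to $s(x)$, we conclude $s(x_{k})\to s(x)$, i.e. $s$ is continuous. I expect the only real subtlety to be guaranteeing feasibility of the comparison points $d_{j}$ for the perturbed subproblems, which is exactly what the projection device secures.
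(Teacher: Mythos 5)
Your proposal is correct. Note, however, that the paper does not actually prove this lemma: its ``proof'' is a pointer to Proposition~3 of Drummond--Iusem and Lemma~1 of Fliege--Svaiter, so there is no in-paper argument to compare against line by line. Your write-up is a complete, self-contained derivation along the standard lines of those references: strong convexity of $h_{x}$ plus closedness and convexity of $\Omega-x$ for (a); the observation $h_{x}(0)=0$ together with the equivalence ``$h_{x}(d)<0$ for some feasible $d$ iff $x$ is not Pareto critical'' for (b) and (c) (your inequality $\max_{i}\nabla f_{i}(x)^{T}d\le h_{x}(d)-\|d\|^{2}/2$ is in fact an equality, which only strengthens the step); and a subsequence argument for (d). The one genuinely delicate point is exactly the one you flag: in (d) the feasible set $\Omega-x_{k}$ moves with $x_{k}$, so the optimality comparison needs feasible surrogates $d_{j}\in\Omega-x_{k_{j}}$ converging to a given $d\in\Omega-x$. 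Your choice $d_{j}=P_{\Omega}(x_{k_{j}}+d)-x_{k_{j}}$, with $\|d_{j}-d\|\le 2\|x_{k_{j}}-x\|$ by nonexpansiveness of the projection, handles this cleanly, and the a priori bound $\|s(x_{k})\|\le 2\max_{i\in I}\|\nabla f_{i}(x_{k})\|$ gives the compactness needed to extract subsequences. No gaps.
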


\begin{proof}
See Proposition 3 in \cite{Drummond} and Lemma 1 in \cite{Fliege}. 
\end{proof}
\vspace{0.2cm}
By statements (b) and (d) in Lemma \ref{lem:2.1}, $\|s(x)\|$ is a suitable Pareto criticality measure for $x$. This remark motivates the following definition.

\begin{definition}
Given $\epsilon$, we say that $x$ is an $\epsilon$-approximate Pareto critical point for (\ref{eq:2.1})-(\ref{eq:2.2}) when $\|s(x)\|\leq\epsilon$.
\end{definition}
\vspace{0.2cm}
It is worth mentioning that this is not the only way to define $\epsilon$-approximate Pareto criticality. For example, in the case $\Omega=\mathbb{R}^{n}$, \cite{Cocchi} consider $x$ as an $\epsilon$-approximate Pareto critical point when
\begin{equation*}
\min_{d\in B[0;1]}\max_{i\in I}\left\{\nabla f_{i}(x)^{T}d\right\}\geq -\epsilon
\end{equation*}
or, equivalently,
\begin{equation}
-\min_{d\in B[0;1]}\max_{i\in I}\left\{\nabla f_{i}(x)^{T}d\right\}\leq \epsilon,
\label{eq:pareto2}    
\end{equation}
where $B[0,1]=\left\{x\in\mathbb{R}^{n}\,:\,\|x\|\leq 1\right\}$. Notice that 
\begin{eqnarray}
\min_{d\in B[0;1]}\max_{i\in I}\left\{\nabla f_{i}(x)^{T}d\right\}&\leq &\max_{i\in I}\left\{\nabla f_{i}(x)^{T}\left(\frac{s(x)}{\|s(x)\|}\right)\right\}\nonumber\\
&  = & \dfrac{1}{\|s(x)\|}\max_{i\in I}\left\{\nabla f_{i}(x)^{T}s(x)\right\}\nonumber\\
&\leq &\dfrac{1}{\|s(x)\|}\left(\max_{i\in I}\left\{\nabla f_{i}(x)^{T}s(x)\right\}+\frac{1}{2}\|s(x)\|^{2}\right).
\label{eq:A}
\end{eqnarray}
Denote 
\begin{equation}
    \xi(x)=\max_{i\in I}\left\{\nabla f_{i}(x)^{T}s(x)\right\}+\frac{1}{2}\|s(x)\|^{2}.
    \label{eq:C}
\end{equation}
By Proposition 2 in \cite{MITA}, we have
\begin{equation}
\|s(x)\|^{2}=2(-\xi(x)).
    \label{eq:B}
\end{equation}
Combining (\ref{eq:A}), (\ref{eq:C}) and (\ref{eq:B}), it follows that
\begin{equation*}
\min_{d\in B[0;1]}\max_{i\in I}\left\{\nabla f_{i}(x)^{T}d\right\}\leq -\dfrac{\|s(x)\|}{2}.
\end{equation*}
Consequently,
\begin{equation*}
\|s(x)\|\leq 2\left(-\min_{d\in B[0;1]}\max_{i\in I}\left\{\nabla f_{i}(x)^{T}d\right\}\right).
\end{equation*}
Therefore, if $x$ is an $\epsilon$-approximate Pareto critical point in the sense of (\ref{eq:pareto2}), then $\|s(x)\|\leq 2\epsilon$.

\section{Universal Nonmonotone Line Search Methods}
\label{sec2}
Let us consider the following general algorithm to solve (\ref{eq:2.1})-(\ref{eq:2.2}):
\begin{algorithm}
\caption{Universal Nonmonotone Line Search Method}\label{algo1}
\noindent\textbf{Step 0.} Choose $x_{0}\in\Omega$, $m_{0}\in\left\{0,1,\ldots,m\right\}$, $c_{1},c_{2}>0$, and $\beta,\rho\in (0,1)$. Set $k:=0$.
\\[0.2cm]
\noindent\textbf{Step 1.} Compute $d(x_{k})\in\Omega-x_{k}$ such that
\begin{equation}
\max_{i\in I}\left\{\nabla f_{i}(x_{k})^{T}d(x_{k})\right\}\leq -c_{1}\|s(x_{k})\|^{2}
\label{eq:3.1}
\end{equation} 
and
\begin{equation}
\|d(x_{k})\|\leq c_{2}\|s(x_{k})\|.
\label{eq:3.2}
\end{equation}
\vspace{0.2cm}
\noindent\textbf{Step 2.1.} Set $\ell:=0$.
\\[0.2cm]
\noindent\textbf{Step 2.2.} Choose $\nu_{k,\ell}\in\mathbb{R}^{m}_{+}$. If 
\begin{equation}
\left|\left\{i\in I\,:\,f_{i}(x_{k}+\beta^{\ell}d(x_{k}))\leq f_{i}(x_{k})+\rho\beta^{\ell}\nabla f_{i}(x_{k})^{T}d(x_{k})\right\}\right|\geq m_{k},
\label{eq:cardinality}
\end{equation}
and
\begin{equation}
f_{i}(x_{k}+\beta^{\ell}d(x_{k}))\leq f_{i}(x_{k})+\rho\beta^{\ell}\nabla f_{i}(x_{k})^{T}d(x_{k})+\left[\nu_{k,\ell}\right]_{i}\quad\forall i\in I,
\label{eq:3.3}
\end{equation}
set $\ell_{k}=\ell$, and go to Step 3. Otherwise, set $\ell:=\ell+1$ and go back to Step 2.2.
\\[0.2cm]
\noindent\textbf{Step 3.} Define $\alpha_{k}=\beta^{\ell_{k}}$, $x_{k+1}=x_{k}+\alpha_{k}d(x_{k})$, and $\nu_{k}=\nu_{k,\ell_{k}}$. Choose $m_{k+1}\in\left\{0,1,\ldots,m\right\}$, set $k:=k+1$ and go back to Step 1.
\end{algorithm}
\vspace{0.2cm}

\begin{remark}
A natural choice for the search direction is $d(x_{k})=s(x_{k})$, which satisfies conditions (\ref{eq:3.1})-(\ref{eq:3.2}) with $c_{1}=c_{2}=1$. In the case $\Omega=\mathbb{R}^{n}$, given symmetric positive definite matrices $B_{i}(x_{k})\in\mathbb{R}^{n\times n}$ ($i=1,\ldots,m$), we can also use the search direction
\begin{equation}
d(x_{k})=\arg\min_{d\in\mathbb{R}^{n}}\max_{i\in I}\left\{\nabla f_{i}(x_{k})^{T}d+\frac{1}{2}d^{T}B_{i}(x_{k})d\right\}.
\label{eq:QN}
\end{equation}
Indeed, if $\lambda_{\max}\left(B_{i}(x_{k})\right)\leq 1/2c_{1}$, and $\lambda_{\min}\left(B_{i}(x_{k})\right)\geq 1/c_{2}$, then the vector $d(x_{k})$ given in (\ref{eq:QN}) also satisfies (\ref{eq:3.1})-(\ref{eq:3.2})\footnote{The proof for this fact follows exactly as in the proof of Proposition 2 in \cite{MITA}, replacing there $x$ with $x_{k}$, $\nabla^{2}f_{i}(x)$ with $B_{i}(x_{k})$, $\zeta$ with $1/2c_{1}$, $\xi$ with $1/c_{2}$; then using the equality $|\xi(x_{k})|=\|s(x_{k})\|/2$, where $\xi(x_{k})$ is defined in (\ref{eq:C}).}. In particular, when all objectives are strongly convex with Lipschitz continuous gradients, the Newtonian direction will satisfy (\ref{eq:3.1})-(\ref{eq:3.2}) for constants $c_{1}$ and $c_{2}$ that depend on the extreme eigenvalues of the Hessian matrices of the objectives.
\end{remark}
\vspace{0.2cm}
\begin{remark}
When $[\nu_{k,\ell}]_{i}>0$, condition (12) permits the acceptance of a stepsize $\beta^{\ell}$ even if $f_{i}(x_{k}+\beta^{\ell}d(x_{k}))>f_{i}(x_{k})$. In contrast, when $m_{k}\geq 1$, condition (11) requires a monotonic decrease for at least $m_{k}$ of the objectives.
\end{remark}

\vspace{0.2cm}
\noindent The analysis of  Algorithm~\ref{algo1} will be done under the following assumptions.
\\[0.2cm]
\noindent\textbf{A1} For each $i=1,\ldots,m$, the objective function $f_{i}$ belongs to the class of functions $C^{1,\theta_{i}}_{H_{i}}\left(\Omega\right)$, $\theta_{i}\in (0,1]$, which have H\"{o}lder continuous gradients:
\begin{equation*}
\|\nabla f_{i}(x)-\nabla f_{i}(y)\|\leq H_{i}\|y-x\|^{\theta_{i}},\,\,\text{for all}\,\,x,y\in \Omega.
\end{equation*}
\noindent\textbf{A2} For each $i\in I$, there exists $f_{i}^{*}\in\mathbb{R}$ such that $f_{i}(x)\geq f_{i}^{*}$ for all $x\in\Omega$.
\\[0.2cm]
\noindent\textbf{A3} For each $i\in I$, $\lim_{T\to +\infty}\frac{1}{T}\sum_{k=0}^{T-1}\left[\nu_{k}\right]_{i}=0$.
\\[0.2cm]
\noindent The next lemma establishes that   Algorithm~\ref{algo1} is well-defined.
\vspace{0.2cm}
\begin{lemma}

\label{lem:3.1}
Suppose that A1 holds. If $s(x_{k})\neq 0$, then there exists $\ell\in\mathbb{N}$ such that 
\begin{equation}
f_{i}(x_{k}+\beta^{\ell}d(x_{k}))\leq f_{i}(x_{k})+\rho\beta^{\ell}\nabla f_{i}(x_{k})^{T}d(x_{k})
\label{eq:monotone}
\end{equation}
for all $i\in I$.
\end{lemma}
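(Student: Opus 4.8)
The plan is to show that the monotone Armijo-type inequality \eqref{eq:monotone} holds for all sufficiently large $\ell$, which in particular guarantees that the less restrictive conditions \eqref{eq:cardinality} and \eqref{eq:3.3} in Step 2.2 are eventually satisfied (since \eqref{eq:monotone} implies the left-hand side of \eqref{eq:cardinality} equals $m\geq m_k$, and \eqref{eq:3.3} follows from \eqref{eq:monotone} because $[\nu_{k,\ell}]_i\geq 0$). So it suffices to produce a single $\ell\in\mathbb{N}$ for which \eqref{eq:monotone} holds for every $i\in I$.

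First I would fix $i\in I$ and invoke the Hölder continuity of $\nabla f_i$ from A1 together with the standard descent-lemma-type estimate: for any $x\in\Omega$ and any $d$ with $x+td\in\Omega$ for $t\in[0,1]$ (which holds here by convexity of $\Omega$, since $x_k+\beta^\ell d(x_k)\in\Omega$ whenever $\beta^\ell\in[0,1]$ and $d(x_k)\in\Omega-x_k$), one has
\begin{equation*}
f_i(x+td)\leq f_i(x)+t\nabla f_i(x)^Td+\frac{H_i}{1+\theta_i}t^{1+\theta_i}\|d\|^{1+\theta_i}.
\end{equation*}
Applying this with $x=x_k$, $d=d(x_k)$, $t=\beta^\ell$ gives
\begin{equation*}
f_i(x_k+\beta^\ell d(x_k))\leq f_i(x_k)+\beta^\ell\nabla f_i(x_k)^Td(x_k)+\frac{H_i}{1+\theta_i}\beta^{\ell(1+\theta_i)}\|d(x_k)\|^{1+\theta_i}.
\end{equation*}
Thus \eqref{eq:monotone} will hold as soon as
\begin{equation*}
\frac{H_i}{1+\theta_i}\beta^{\ell(1+\theta_i)}\|d(x_k)\|^{1+\theta_i}\leq -(1-\rho)\beta^\ell\nabla f_i(x_k)^Td(x_k),
\end{equation*}
i.e. $\beta^{\ell\theta_i}\leq \dfrac{(1-\rho)(1+\theta_i)\bigl(-\nabla f_i(x_k)^Td(x_k)\bigr)}{H_i\|d(x_k)\|^{1+\theta_i}}$.

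The key point making the right-hand side strictly positive is that $s(x_k)\neq 0$: by Lemma~\ref{lem:2.1}(c) this means $x_k$ is not Pareto critical, and combined with \eqref{eq:3.1} we get $\nabla f_i(x_k)^Td(x_k)\leq \max_{j\in I}\nabla f_j(x_k)^Td(x_k)\leq -c_1\|s(x_k)\|^2<0$ for every $i$, while \eqref{eq:3.2} keeps $\|d(x_k)\|$ bounded by $c_2\|s(x_k)\|$ (and $d(x_k)\neq 0$, else \eqref{eq:3.1} would be violated). Hence the bound to satisfy reads $\beta^{\ell\theta_i}\leq$ (some fixed positive number depending on $k,i$), and since $\beta\in(0,1)$ and $\theta_i>0$, the left-hand side $\beta^{\ell\theta_i}\to 0$ as $\ell\to\infty$; so there is $\ell_i\in\mathbb{N}$ beyond which it holds. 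Taking $\ell=\max_{i\in I}\ell_i$ makes \eqref{eq:monotone} hold simultaneously for all $i\in I$, which completes the proof. There is no real obstacle here; the only mild care needed is the passage from \eqref{eq:monotone} to acceptance in Step 2.2 (the cardinality and $\nu$-relaxed conditions), and recalling that convexity of $\Omega$ is what legitimizes the descent lemma along the segment.
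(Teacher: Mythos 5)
Your proof is correct, but it takes a genuinely different (more quantitative) route than the paper. The paper's own proof of Lemma~\ref{lem:3.1} uses only the differentiability of each $f_{i}$: since $s(x_{k})\neq 0$ and \eqref{eq:3.1} force $\nabla f_{i}(x_{k})^{T}d(x_{k})<0$, the difference quotient $\bigl(f_{i}(x_{k}+\alpha d(x_{k}))-f_{i}(x_{k})\bigr)/\alpha$ converges to $\nabla f_{i}(x_{k})^{T}d(x_{k})<\rho\nabla f_{i}(x_{k})^{T}d(x_{k})$ as $\alpha\to 0^{+}$, so the Armijo inequality holds for all $\alpha\in(0,\delta_{i}]$ for some $\delta_{i}>0$; one then takes $\delta=\min_{i}\delta_{i}$ and any $\ell$ with $\beta^{\ell}\leq\delta$. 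You instead invoke the H\"{o}lder descent estimate (the same Lemma~1 of \cite{Maryam} that the paper uses later in \eqref{eq:3.11}) and derive the explicit threshold $\beta^{\ell\theta_{i}}\leq (1-\rho)(1+\theta_{i})\bigl(-\nabla f_{i}(x_{k})^{T}d(x_{k})\bigr)/\bigl(H_{i}\|d(x_{k})\|^{1+\theta_{i}}\bigr)$, whose positivity you correctly justify via \eqref{eq:3.1}--\eqref{eq:3.2} and $d(x_{k})\neq 0$. Both arguments are sound. The paper's version is more economical for this lemma --- it needs only differentiability, not the H\"{o}lder constant --- while your version buys an explicit upper bound on the accepted $\ell$ and essentially anticipates the step-size lower bound of Lemma~\ref{lem:lower}, so it does double duty. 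Your opening observation that \eqref{eq:monotone} implies acceptance in Step~2.2 (cardinality $m\geq m_{k}$ and $[\nu_{k,\ell}]_{i}\geq 0$) is correct and is the reason the lemma yields well-definedness, though it is not needed for the statement itself.
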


\begin{proof}
Since $s(x_{k})\neq 0$, it follows from (\ref{eq:3.1}) that
\begin{equation*}
\max_{i\in I}\left\{\nabla f_{i}(x_{k})^{T}d(x_{k})\right\}<-c_{1}\|s(x_{k})\|^{2}<0.
\end{equation*}
Thus, for any $i\in I$, we have
\begin{equation}
\nabla f_{i}(x_{k})^{T}d(x_{k})<0.
\label{eq:3.4}
\end{equation}
In view of A1, $f_{i}$ is differentiable, and so
\begin{equation}
\lim_{\alpha\to 0^{+}}\dfrac{f_{i}(x_{k}+\alpha d(x_{k}))-f_{i}(x_{k})}{\alpha}=\nabla f_{i}(x_{k})^{T}d(x_{k})<\rho\nabla f_{i}(x_{k})^{T}d(x_{k}),
\label{eq:3.5}
\end{equation}
where the last inequality is due to (\ref{eq:3.4}) and $\rho\in (0,1)$. As a consequence of (\ref{eq:3.5}), there exists $\delta_{i}>0$ such that
\begin{equation}
f_{i}(x_{k}+\alpha d(x_{k}))\leq f_{i}(x_{k})+\rho\alpha\nabla f_{i}(x_{k})^{T}d(x_{k})
\label{eq:3.6}
\end{equation}
for all $\alpha\in (0,\delta_{i}]$. Thus, defining $\delta=\min_{i\in I}\left\{\delta_{i}\right\}$, it follows that (\ref{eq:3.6}) holds for all $i\in I$ as long as $\alpha\in (0,\delta]$. Since $\beta\in (0,1)$, there exists $\ell\in\mathbb{N}$ such that $\beta^{\ell_{k}}\leq\delta$. Therefore, for such $\ell$, (\ref{eq:monotone}) holds for all $i\in I$.
\end{proof}
\noindent The following lemma gives a lower bound for the sequence $\left\{\alpha_{k}\right\}$.
\vspace{0.2cm}
\begin{lemma}
\label{lem:lower}
Suppose that A1 hold and let $\left\{x_{k}\right\}_{k=0}^{T}$ be generated by  Algorithm~\ref{algo1} with 
\begin{equation}
\|s(x_{k})\|>\epsilon,\quad\text{for}\,\,k=0,\ldots,T-1,
\label{eq:extra2}
\end{equation}
for some $\epsilon\in (0,1)$.
Then
\begin{equation}
\alpha_{k}\geq\kappa_{1}\epsilon^{\frac{1-\theta_{\min}}{\theta_{\min}}}
\label{eq:3.7}
\end{equation}
for $k=0,\ldots,T-1$, where
\begin{equation}
\kappa_{1}=\min\left\{1,\min_{i\in I}\left\{\beta\left[\dfrac{(1+\theta_{i})c_{1}(1-\rho)}{c_{2}^{1+\theta_{i}}H_{i}}\right]^{\frac{1}{\theta_{i}}}\right\}\right\}
\label{eq:3.8}
\end{equation}
and
\begin{equation}
\theta_{\min}=\min_{i\in I}\left\{\theta_{i}\right\}.
\label{eq:3.9}
\end{equation}
\end{lemma}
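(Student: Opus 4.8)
The plan is to show that once the stepsize $\beta^{\ell}$ becomes small enough, the \emph{monotone} Armijo condition (\ref{eq:monotone}) holds for every $i\in I$ simultaneously; since that condition implies both (\ref{eq:cardinality}) (the cardinality is then $m\geq m_{k}$) and (\ref{eq:3.3}) (because $[\nu_{k,\ell}]_{i}\geq 0$), the line search must have terminated no later than this point. This gives an explicit lower bound on the accepted $\ell_{k}$, hence on $\alpha_{k}=\beta^{\ell_{k}}$.

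First I would fix $k\in\{0,\ldots,T-1\}$ and $i\in I$, and use the Hölder continuity of $\nabla f_{i}$ from A1 together with the integral form of the mean value theorem to obtain the standard descent-type inequality
\[
f_{i}(x_{k}+\alpha d(x_{k}))\leq f_{i}(x_{k})+\alpha\nabla f_{i}(x_{k})^{T}d(x_{k})+\frac{H_{i}}{1+\theta_{i}}\alpha^{1+\theta_{i}}\|d(x_{k})\|^{1+\theta_{i}}
\]
for $\alpha>0$. Next I would impose that the right-hand side be bounded above by $f_{i}(x_{k})+\rho\alpha\nabla f_{i}(x_{k})^{T}d(x_{k})$; rearranging, this amounts to
\[
\frac{H_{i}}{1+\theta_{i}}\alpha^{\theta_{i}}\|d(x_{k})\|^{1+\theta_{i}}\leq (1-\rho)\bigl(-\nabla f_{i}(x_{k})^{T}d(x_{k})\bigr).
\]
Now I would lower-bound the right-hand side using (\ref{eq:3.1}), namely $-\nabla f_{i}(x_{k})^{T}d(x_{k})\geq -\max_{j}\nabla f_{j}(x_{k})^{T}d(x_{k})\geq c_{1}\|s(x_{k})\|^{2}$, and upper-bound $\|d(x_{k})\|^{1+\theta_{i}}\leq c_{2}^{1+\theta_{i}}\|s(x_{k})\|^{1+\theta_{i}}$ via (\ref{eq:3.2}). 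Substituting, a sufficient condition for the Armijo inequality for objective $i$ becomes
\[
\alpha^{\theta_{i}}\leq \frac{(1+\theta_{i})c_{1}(1-\rho)}{c_{2}^{1+\theta_{i}}H_{i}}\,\|s(x_{k})\|^{1-\theta_{i}},
\]
i.e. $\alpha\leq \bigl[(1+\theta_{i})c_{1}(1-\rho)/(c_{2}^{1+\theta_{i}}H_{i})\bigr]^{1/\theta_{i}}\|s(x_{k})\|^{(1-\theta_{i})/\theta_{i}}$.

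Then I would use the hypothesis $\|s(x_{k})\|>\epsilon$ together with $\epsilon\in(0,1)$: since $(1-\theta_{i})/\theta_{i}\geq 0$, we have $\|s(x_{k})\|^{(1-\theta_{i})/\theta_{i}}>\epsilon^{(1-\theta_{i})/\theta_{i}}\geq \epsilon^{(1-\theta_{\min})/\theta_{\min}}$ (the exponent $(1-t)/t$ being decreasing in $t$ and $\epsilon<1$). Hence any $\alpha\leq \beta\,[(1+\theta_{i})c_{1}(1-\rho)/(c_{2}^{1+\theta_{i}}H_{i})]^{1/\theta_{i}}\epsilon^{(1-\theta_{\min})/\theta_{\min}}$ satisfies the Armijo inequality for objective $i$; taking the minimum over $i\in I$ and also intersecting with $\alpha\le 1$ shows that every $\alpha\le\kappa_{1}\epsilon^{(1-\theta_{\min})/\theta_{\min}}$, with $\kappa_{1}$ as in (\ref{eq:3.8}), satisfies (\ref{eq:monotone}) for all $i$ simultaneously. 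By the backtracking mechanism, the smallest accepted power $\alpha_{k}=\beta^{\ell_{k}}$ cannot be smaller than $\beta$ times this threshold — more precisely, either $\ell_{k}=0$ and $\alpha_{k}=1\ge\kappa_{1}\epsilon^{(1-\theta_{\min})/\theta_{\min}}$, or $\beta^{\ell_{k}-1}$ was rejected, which forces $\beta^{\ell_{k}-1}>\kappa_{1}\epsilon^{(1-\theta_{\min})/\theta_{\min}}/\beta$ and thus $\alpha_{k}=\beta^{\ell_{k}}>\kappa_{1}\epsilon^{(1-\theta_{\min})/\theta_{\min}}$; the factor $\beta$ already sitting inside $\kappa_{1}$ is what absorbs this. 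This yields (\ref{eq:3.7}).

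The only delicate points are bookkeeping ones: making sure the factor $\beta$ in the definition of $\kappa_{1}$ correctly accounts for the overshoot of backtracking (so that a \emph{rejected} step of size $\beta^{\ell_{k}-1}$ translates into the stated bound on $\beta^{\ell_{k}}$), and handling the two exponents consistently — the monotone case $\theta_{i}=1$ makes the exponent $(1-\theta_i)/\theta_i$ vanish and the bound reduces to $\alpha_k\geq\kappa_1$, which is the expected Lipschitz-gradient behaviour. The use of $\epsilon<1$ to pass from $\|s(x_k)\|>\epsilon$ to a clean power of $\epsilon$ with the uniform exponent $(1-\theta_{\min})/\theta_{\min}$ is the one genuinely non-mechanical step, but it is short. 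I expect no real obstacle beyond keeping the chain of inequalities in the right direction.
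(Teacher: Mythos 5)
Your proposal is correct and follows essentially the same route as the paper's proof: the paper formalizes your "first stepsize at which the monotone Armijo condition holds for all $i$" as an auxiliary index $\hat{\ell}_{k}$ with $\ell_{k}\leq\hat{\ell}_{k}$, and then derives exactly your chain of inequalities from the H\"{o}lder descent lemma, conditions (\ref{eq:3.1})--(\ref{eq:3.2}), and $\|s(x_{k})\|>\epsilon$ with $\epsilon<1$ to uniformize the exponent. Your observation that rejection of the relaxed/cardinality test forces failure of the plain Armijo condition for some $i$ is precisely what justifies $\ell_{k}\leq\hat{\ell}_{k}$ in the paper, so the two arguments coincide.
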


\begin{proof}
Consider $k\in\left\{0,\ldots,T-1\right\}$ and consider the index
\begin{equation*}
\hat{\ell}_{k}\equiv\min\left\{\ell\in\mathbb{N}\,:\,\text{(\ref{eq:monotone}) holds for all $i\in I$}\right\},
\end{equation*}
which is well defined due to (\ref{eq:extra2}) and Lemma \ref{lem:3.1}. We will show first that
\begin{equation}
\beta^{\hat{\ell}_{k}}\geq\kappa_{1}\epsilon^{\frac{1-\theta_{\min}}{\theta_{\min}}}.
\label{eq:beta}
\end{equation}
If $\hat{\ell}_{k}=0$, then
\begin{equation*}
\beta^{\hat{\ell}_{k}}=1>\epsilon^{\frac{1-\theta_{\min}}{\theta_{\min}}}\geq\kappa_{1}\epsilon^{\frac{1-\theta_{\min}}{\theta_{\min}}},
\end{equation*}
that is, (\ref{eq:beta}) holds. Now, suppose that $\hat{\ell}_{k}>0$. Then, by the definition of $\hat{\ell}_{k}$, there exists $i\in I$ such that
\begin{equation}
f_{i}\left(x_{k}+\beta^{\hat{\ell}_{k}-1}d(x_{k}))-f_{i}(x_{k}\right)>\rho\beta^{\hat{\ell}_{k}-1}\nabla f_{i}(x_{k})^{T}d(x_{k})+\left[\nu_{k,\ell}\right]_{i}.
\label{eq:3.10}
\end{equation}
On the other hand, by A1 and Lemma 1 in \cite{Maryam} we have
\begin{equation}
f_{i}\left(x_{k}+\beta^{\hat{\ell}_{k}-1}d(x_{k})\right)\leq f_{i}(x_{k})+\beta^{\hat{\ell}_{k}-1}\nabla f_{i}(x_{k})^{T}d(x_{k})+\dfrac{H_{i}\left(\beta^{\hat{\ell}_{k}}-1\right)^{1+\theta_{i}}}{1+\theta_{i}}\|d(x_{k})\|^{1+\theta_{i}}.
\label{eq:3.11}
\end{equation}
Combining (\ref{eq:3.10}) and (\ref{eq:3.11}), it follows that
\begin{equation*}
\rho\beta^{\hat{\ell}_{k}-1}\nabla f_{i}(x_{k})^{T}d(x_{k})<\beta^{\hat{\ell}_{k}-1}\nabla f_{i}(x_{k})^{T}d(x_{k})+\dfrac{H_{i}\left(\beta^{\hat{\ell}_{k}-1}\right)^{1+\theta_{i}}}{1+\theta_{i}}\|d(x_{k})\|^{1+\theta_{i}},
\end{equation*}
which implies that
\begin{equation}
\left(\beta^{\hat{\ell}_{k}-1}\right)^{\theta_{i}}>\dfrac{(1+\theta_{i})(1-\rho)}{H_{i}}\left(-\dfrac{\nabla f_{i}(x_{k})^{T}d(x_{k})}{\|d(x_{k})\|^{1+\theta_{i}}}\right).
\label{eq:3.12}
\end{equation}
By (\ref{eq:3.1}) and (\ref{eq:3.2}) we have
\begin{equation}
-\dfrac{\nabla f_{i}(x_{k})^{T}d(x_{k})}{\|d(x_{k})\|^{1+\theta_{i}}}\geq\dfrac{c_{1}\|s(x_{k})\|^{2}}{c_{2}^{1+\theta_{i}}\|s(x_{k})\|^{1+\theta_{i}}}=\dfrac{c_{1}}{c_{2}^{1+\theta_{i}}}\|s(x_{k})\|^{1-\theta_{i}}.
\label{eq:3.13}
\end{equation}
Combining (\ref{eq:3.12}) and (\ref{eq:3.13}), we obtain
\begin{equation*}
\beta^{\hat{\ell}_{k}}=\beta\left(\beta^{\hat{\ell}_{k}-1}\right)>\beta\left[\dfrac{(1+\theta_{i})c_{1}(1-\rho)}{c_{2}^{1+\theta_{i}}H_{i}}\right]^{\frac{1}{\theta_{i}}}\|s(x_{k})\|^{\frac{1-\theta_{i}}{\theta_{i}}}.
\end{equation*}
Then, by (\ref{eq:extra2}) we have
\begin{equation*}
\beta^{\hat{\ell}_{k}}>\beta\left[\dfrac{(1+\theta_{i})c_{1}(1-\rho)}{c_{2}^{1+\theta_{i}}H_{i}}\right]^{\frac{1}{\theta_{i}}}\epsilon^{\frac{1-\theta_{i}}{\theta_{i}}}\geq \min_{i\in I}\left\{\beta\left[\dfrac{(1+\theta_{i})c_{1}(1-\rho)}{c_{2}^{1+\theta_{i}}H_{i}}\right]^{\frac{1}{\theta_{i}}}\right\}\epsilon^{\frac{1-\theta_{\min}}{\theta_{\min}}}\geq\kappa_{1}\epsilon^{\frac{1-\theta_{\min}}{\theta_{\min}}},
\end{equation*}
that is, (\ref{eq:beta}) also holds when $\hat{\ell}_{k}>0$. Finally, since $\ell_{k}\leq\hat{\ell}_{k}$,  it follows from (\ref{eq:beta}) that
\begin{equation*}
    \alpha_{k}=\beta^{\ell_{k}}\geq\beta^{\hat{\ell_{k}}}\geq\kappa_{1}\epsilon^{\frac{1-\theta_{\min}}{\theta_{\min}}}.
\end{equation*}
\end{proof}

\begin{remark}
By Lemma \ref{lem:lower} and the fact that $\alpha_{k}=\beta^{\ell_{k}}$, it follows that
\begin{equation*}
\ell_{k}\leq\dfrac{\log\left(\kappa_{1}^{-1}\epsilon^{-\left(\frac{1-\theta_{\min}}{\theta_{\min}}\right)}\right)}{|\log(\beta)|}.
\end{equation*}
This means that, each iteration of Algorithm 1 with $\|s(x_{k})\|>\epsilon$ requires the computation of one Jacobian matrix of $F(\,\cdot\,)$ and at most $\mathcal{O}\left(\log\left(\epsilon^{-\left(\frac{1-\theta_{\min}}{\theta_{\min}}\right)}\right)\right)$ evaluations of $F(\,\cdot\,)$.
\end{remark}
\vspace{0.2cm}
Given $i\in I$, it follows from Assumption A3 that for any $\delta>0$ there exists $C_{i}(\delta)>0$ such that
\begin{equation}
\dfrac{1}{T}\sum_{k=0}^{T-1}[\nu_{k}]_{i}\leq\delta,\quad\forall T\geq C_{i}(\delta).
\label{eq:3.14}
\end{equation}
The theorem below establishes an upper bound for the number of iterations that  Algorithm~\ref{algo1} need to find an $\epsilon$-approximate Pareto critical point. The proof is a direct adaptation of the proof of Theorem 2 in \cite{GeovaniSachs}.
\vspace{0.2cm}
\begin{theorem}
\label{thm:3.1}
Suppose that A1-A3 hold and let $\left\{x_{k}\right\}_{k=0}^{T}$ be generated by  Algorithm~\ref{algo1} with 
\begin{equation}
\|s(x_{k})\|>\epsilon,\quad\text{for}\,\,k=0,\ldots,T-1,
\label{eq:3.15}
\end{equation}
for some $\epsilon\in (0,1)$. Then
\begin{equation}
T\leq\min_{i\in I}\max\left\{C_{i}\left(\dfrac{\kappa_{2}}{2}\epsilon^{\left(1+\frac{1}{\theta_{\min}}\right)}\right),\dfrac{2(f_{i}(x_{0})-f_{i}^{*})}{\kappa_{2}}\epsilon^{-\left(1+\frac{1}{\theta_{\min}}\right)}\right\}
\label{eq:3.16}
\end{equation}
where $C_{i}(\,\cdot\,)$ is defined in (\ref{eq:3.14}) and 
\begin{equation}
\kappa_{2}=c_{1}\rho\kappa_{1},
\label{eq:3.17}
\end{equation}
with $\kappa_{1}$ given in (\ref{eq:3.8}).
\end{theorem}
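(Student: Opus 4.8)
The strategy is the standard "sufficient decrease plus telescoping" argument, adapted to the multiobjective, nonmonotone setting. First I would extract a per-iteration decrease inequality for each objective. From the acceptance condition (\ref{eq:3.3}) together with the descent property (\ref{eq:3.1}), for every $i\in I$ and every $k\in\{0,\dots,T-1\}$ we get
\begin{equation*}
f_{i}(x_{k+1})\leq f_{i}(x_{k})+\rho\alpha_{k}\nabla f_{i}(x_{k})^{T}d(x_{k})+[\nu_{k}]_{i}\leq f_{i}(x_{k})-\rho\alpha_{k}c_{1}\|s(x_{k})\|^{2}+[\nu_{k}]_{i}.
\end{equation*}
Now invoke the lower bound on the step size from Lemma~\ref{lem:lower}, namely $\alpha_{k}\geq\kappa_{1}\epsilon^{(1-\theta_{\min})/\theta_{\min}}$, together with $\|s(x_{k})\|>\epsilon$ from (\ref{eq:3.15}), to obtain
\begin{equation*}
\rho\alpha_{k}c_{1}\|s(x_{k})\|^{2}\geq c_{1}\rho\kappa_{1}\epsilon^{\frac{1-\theta_{\min}}{\theta_{\min}}}\epsilon^{2}=\kappa_{2}\,\epsilon^{1+\frac{1}{\theta_{\min}}},
\end{equation*}
using $\kappa_{2}=c_{1}\rho\kappa_{1}$ and the identity $\frac{1-\theta_{\min}}{\theta_{\min}}+2=1+\frac{1}{\theta_{\min}}$. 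Hence for each $i$,
\begin{equation*}
f_{i}(x_{k+1})\leq f_{i}(x_{k})-\kappa_{2}\epsilon^{1+\frac{1}{\theta_{\min}}}+[\nu_{k}]_{i}.
\end{equation*}

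Next I would sum this over $k=0,\dots,T-1$. Telescoping the $f_{i}$ terms and using the lower bound $f_{i}(x_{T})\geq f_{i}^{*}$ from A2 gives
\begin{equation*}
T\kappa_{2}\epsilon^{1+\frac{1}{\theta_{\min}}}\leq f_{i}(x_{0})-f_{i}(x_{T})+\sum_{k=0}^{T-1}[\nu_{k}]_{i}\leq f_{i}(x_{0})-f_{i}^{*}+\sum_{k=0}^{T-1}[\nu_{k}]_{i}.
\end{equation*}
The point where the averaging assumption A3 enters is the treatment of $\sum_{k=0}^{T-1}[\nu_{k}]_{i}$: I would argue by contradiction or by a dichotomy. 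Suppose $T>C_{i}\!\left(\tfrac{\kappa_{2}}{2}\epsilon^{1+1/\theta_{\min}}\right)$; then by the definition (\ref{eq:3.14}) of $C_{i}(\cdot)$ with $\delta=\tfrac{\kappa_{2}}{2}\epsilon^{1+1/\theta_{\min}}$ we have $\sum_{k=0}^{T-1}[\nu_{k}]_{i}\leq \tfrac{\kappa_{2}}{2}\epsilon^{1+1/\theta_{\min}}T$. Substituting this into the displayed inequality and absorbing the $\nu$ sum into the left-hand side yields $\tfrac{\kappa_{2}}{2}\epsilon^{1+1/\theta_{\min}}T\leq f_{i}(x_{0})-f_{i}^{*}$, i.e. $T\leq \tfrac{2(f_{i}(x_{0})-f_{i}^{*})}{\kappa_{2}}\epsilon^{-(1+1/\theta_{\min})}$. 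Combining the two cases, $T\leq\max\{C_{i}(\tfrac{\kappa_{2}}{2}\epsilon^{1+1/\theta_{\min}}),\,\tfrac{2(f_{i}(x_{0})-f_{i}^{*})}{\kappa_{2}}\epsilon^{-(1+1/\theta_{\min})}\}$ for every $i\in I$, and since this holds for each $i$ we may take the minimum over $i$, which is exactly (\ref{eq:3.16}).

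The only genuinely delicate point is making the dichotomy airtight: one must be careful that the inequality $\sum_{k=0}^{T-1}[\nu_{k}]_{i}\leq\delta T$ supplied by A3 is only valid once $T\geq C_{i}(\delta)$, so the "large $T$" branch of the argument must explicitly assume $T>C_{i}(\delta)$ before invoking it, and the final bound then correctly records $C_{i}(\delta)$ as the alternative. Everything else — the descent estimate, the step-size substitution, the telescoping — is routine given Lemma~\ref{lem:lower} and assumptions A1–A2. I would also remark that this is where the paper's claim of following the proof of Theorem~2 in \cite{GeovaniSachs} is realized: the averaging hypothesis A3 is precisely what replaces the summability of $\{\nu_{k}\}$ in the complexity count, and it is strong enough for the finite-horizon bound while weak enough to also support the liminf-type convergence result stated elsewhere.
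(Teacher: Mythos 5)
Your proposal is correct and follows essentially the same route as the paper's proof: the per-iteration decrease from (\ref{eq:3.3}) and (\ref{eq:3.1}), the step-size lower bound from Lemma~\ref{lem:lower}, telescoping with A2, the dichotomy on whether $T$ exceeds $C_{i}\bigl(\tfrac{\kappa_{2}}{2}\epsilon^{1+1/\theta_{\min}}\bigr)$, and finally the minimum over $i\in I$. The only cosmetic difference is that you invoke (\ref{eq:3.1}) directly where the paper cites (\ref{eq:3.13}); the resulting inequality is the same.
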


\begin{proof}
Let $i\in I$. By (\ref{eq:3.3}), (\ref{eq:3.13}), (\ref{eq:3.15}), Lemma \ref{lem:lower}, and (\ref{eq:3.17}), we have
\begin{eqnarray*}
\left[\nu_{k}\right]_{i}+f_{i}(x_{k})-f_{i}(x_{k+1})&\geq &\rho\alpha_{k}\left(-\nabla f_{i}(x_{k})^{T}d(x_{k})\right)\geq c_{1}\rho\alpha_{k}\|s(x_{k})\|^{2}\\
&\geq & c_{1}\rho\kappa_{1}\epsilon^{\left(\frac{1-\theta_{\min}}{\theta_{\min}}\right)}\epsilon^{2}\\
&=    &\kappa_{2}\epsilon^{\left(1+\frac{1}{\theta_{\min}}\right)},
\end{eqnarray*}
for $k=0,\ldots,T-1$. Now, summing up these inqualities and using A2, we get
\begin{equation*}
\sum_{k=0}^{T-1}\left[\nu_{k}\right]_{i}+f_{i}(x_{0})-f_{i}^{*}\geq T\kappa_{2}\epsilon^{\left(1+\frac{1}{\theta_{\min}}\right)},
\end{equation*}
which gives
\begin{equation}
\dfrac{1}{\kappa_{2}T}\sum_{k=0}^{T-1}\left[\nu_{k}\right]_{i}+\dfrac{f_{i}(x_{0})-f_{i}^{*}}{\kappa_{2}T}\geq\epsilon^{\left(1+\frac{1}{\theta_{\min}}\right)}.
\label{eq:3.18}
\end{equation}
Suppose that
\begin{equation*}
T\geq C_{i}\left(\dfrac{\kappa_{2}}{2}\epsilon^{\left(1+\frac{1}{\theta_{\min}}\right)}\right).
\end{equation*}
In view of the definition of $C_{i}(\,\cdot\,)$ in (\ref{eq:3.14}), this means that 
\begin{equation}
\dfrac{1}{T}\sum_{k=0}^{T-1}\left[\nu_{k}\right]_{i}\leq\dfrac{\kappa_{2}}{2}\epsilon^{\left(1+\frac{1}{\theta_{\min}}\right)}.
\label{eq:3.19}
\end{equation}
In this case, combining (\ref{eq:3.18}) and (\ref{eq:3.19}), it follows that
\begin{equation*}
\dfrac{f_{i}(x_{0})-f_{i}^{*}}{\kappa_{2}T}\geq \dfrac{1}{2}\epsilon^{\left(1+\frac{1}{\theta_{\min}}\right)}
\end{equation*}
and so
\begin{equation*}
T\leq\dfrac{2(f_{i}(x_{0})-f_{i}^{*})}{\kappa_{2}}\epsilon^{-\left(1+\frac{1}{\theta_{\min}}\right)}
\end{equation*}
Therefore, in any case we have
\begin{equation}
T\leq\max\left\{C_{i}\left(\dfrac{\kappa_{2}}{2}\epsilon^{\left(1+\frac{1}{\theta_{\min}}\right)}\right),\dfrac{2(f_{i}(x_{0})-f_{i}^{*})}{\kappa_{2}}\epsilon^{-\left(1+\frac{1}{\theta_{\min}}\right)}\right\}.
\label{eq:3.20}
\end{equation}
Since $i\in I$ was arbitrarily chose, it follows that (\ref{eq:3.20}) holds for all $i\in I$. Consequently, (\ref{eq:3.16}) is true.
\end{proof}
As a consequence of Theorem \ref{thm:3.1} we have the following global convergence result for  Algorithm~\ref{algo1}.
\vspace{0.2cm}
\begin{corollary}
\label{cor:3.1}
Suppose that A1-A3 hold and let $\left\{x_{k}\right\}_{k\geq 0}$ be a sequence generated by Algorithm~\ref{algo1}. Then, either exists $\bar{k}$ such that $s(x_{\bar{k}})=0$ or 
\begin{equation}
\liminf_{k\to +\infty}\|s(x_{k})\|=0.
\label{eq:3.21}
\end{equation}
\end{corollary}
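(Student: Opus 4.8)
The plan is to derive the corollary from Theorem~\ref{thm:3.1} by contradiction. Suppose that neither alternative in the statement holds. Then, on the one hand, $s(x_k)\neq 0$ for every $k\geq 0$, so Algorithm~\ref{algo1} never terminates in Step~1 and produces an infinite sequence $\{x_k\}_{k\geq 0}$; on the other hand, $\liminf_{k\to+\infty}\|s(x_k)\|>0$, so there exist $\epsilon\in(0,1)$ and an index $K\in\mathbb{N}$ such that $\|s(x_k)\|>\epsilon$ for all $k\geq K$.

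The next step is to observe that the tail sequence $\{x_{K+j}\}_{j\geq 0}$ is itself a sequence generated by Algorithm~\ref{algo1}, namely the one obtained by restarting from $x_{K}$ with $m_{K}$ and all subsequent data inherited from the original run. Assumptions A1 and A2 are properties of the $f_i$ alone and hold verbatim. For A3 one checks that
\[
\frac{1}{T}\sum_{j=0}^{T-1}[\nu_{K+j}]_i = \frac{K+T}{T}\cdot\frac{1}{K+T}\sum_{k=0}^{K+T-1}[\nu_k]_i \; - \; \frac{1}{T}\sum_{k=0}^{K-1}[\nu_k]_i,
\]
and both terms on the right tend to $0$ as $T\to+\infty$: the first because $\frac{K+T}{T}\to 1$ while the Cesàro average $\frac{1}{K+T}\sum_{k=0}^{K+T-1}[\nu_k]_i\to 0$ by A3, and the second because its numerator is a fixed constant. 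Hence A3 also holds for the tail sequence.

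Then I would apply Theorem~\ref{thm:3.1} to the finite segment $\{x_{K+j}\}_{j=0}^{T}$ for an arbitrary $T\in\mathbb{N}$: since $\|s(x_{K+j})\|>\epsilon$ for $j=0,\ldots,T-1$ by the choice of $K$, the theorem yields
\[
T\leq\min_{i\in I}\max\left\{C_i\!\left(\tfrac{\kappa_2}{2}\epsilon^{\left(1+\frac{1}{\theta_{\min}}\right)}\right),\ \tfrac{2(f_i(x_K)-f_i^{*})}{\kappa_2}\,\epsilon^{-\left(1+\frac{1}{\theta_{\min}}\right)}\right\},
\]
and the right-hand side is a finite constant independent of $T$. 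Letting $T\to+\infty$ gives a contradiction, so one of the two alternatives must hold.

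I do not anticipate a deep obstacle; the only point that needs genuine care is precisely this reduction to the tail. One cannot apply Theorem~\ref{thm:3.1} directly to $\{x_k\}_{k=0}^{T}$ because the uniform bound $\|s(x_k)\|>\epsilon$ is only available from the index $K$ onward, so the mild technical work is to confirm that the shifted sequence is a legitimate instance of Algorithm~\ref{algo1} and that A3 survives the shift, as verified above.
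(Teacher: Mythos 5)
Your proof is correct, and it rests on the same engine as the paper's own argument --- the finite-horizon bound of Theorem~\ref{thm:3.1} --- but the execution differs. The paper argues directly: for each fixed $\epsilon\in(0,1)$ it applies Theorem~\ref{thm:3.1} to the initial segment $\left\{x_{k}\right\}_{k=0}^{T}$ and concludes that once $T$ exceeds the bound in (\ref{eq:3.16}) one must have $\min_{0\leq k\leq T-1}\|s(x_{k})\|\leq\epsilon$; letting $\epsilon\to 0$ gives $\inf_{k}\|s(x_{k})\|=0$, from which the stated dichotomy is read off. You instead negate the conclusion, extract a tail index $K$ with $\|s(x_{k})\|>\epsilon$ for all $k\geq K$, and apply the theorem to the run restarted from $x_{K}$. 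This costs you two extra verifications --- that the tail is a legitimate instance of Algorithm~\ref{algo1} and that A3 survives the index shift --- and you carry out both correctly; in particular your Ces\`{a}ro-average decomposition of $\frac{1}{T}\sum_{j=0}^{T-1}[\nu_{K+j}]_{i}$ is exactly what is needed. What your route buys is that the dichotomy is immediate from the contradiction, whereas the paper's final step (passing from $\inf_{k}\|s(x_{k})\|=0$ with all terms nonzero to $\liminf_{k\to+\infty}\|s(x_{k})\|=0$) is left implicit and needs a one-line observation that a fixed index can only realize values below $1/n$ for finitely many $n$. What the paper's route buys is brevity: no restart argument and no shifted version of A3 are required.
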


\begin{proof}
Let $\epsilon\in (0,1)$. From Theorem \ref{thm:3.1}, if 
\begin{equation*}
T>\min_{i\in I}\max\left\{C_{i}\left(\dfrac{\kappa_{2}}{2}\epsilon^{\left(1+\frac{1}{\theta_{\min}}\right)}\right),\dfrac{2(f_{i}(x_{0})-f_{i}^{*})}{\kappa_{2}}\epsilon^{-\left(1+\frac{1}{\theta_{\min}}\right)}\right\}
\end{equation*}
then
\begin{equation*}
\min_{k=0,\ldots,T-1}\|s(x_{k})\|\leq\epsilon.
\end{equation*}
Since $\epsilon\in (0,1)$ was chosen arbitrarily, this shows that 
\begin{equation*}
\lim_{k\to +\infty}\left(\min_{k=0,\ldots,T-1}\|s(x_{k})\|\right)=0.
\end{equation*}
Thus, either there exists $\bar{k}$ such that $s(x_{\bar{k}})=0$ or (\ref{eq:3.21}) holds.
\end{proof}

\section{Particular Cases}\label{sec3}

In Algorithm~\ref{algo1}, different choices for $\nu_{k,\ell}\in\mathbb{R}^{m}_{+}$ in Step 2.2 produce different methods. For example, if
\begin{equation}
\text{$\nu_{k,\ell}=\nu_{k}\equiv 0$ for all $k$ and $\ell$},
\label{eq:4.1}
\end{equation}
then Algorithm~\ref{algo1} reduces to an instance of the monotone projected gradient method proposed in \cite{Drummond}. Clearly, this choice gives a sequence $\left\{\nu_{k}\right\}_{k\geq 0}$ that satisfies A3 and for which (\ref{eq:3.14}) gives 
\begin{equation*}
C_{i}(\delta)=1\quad\text{for each $i\in I$}. 
\end{equation*}
Therefore, it follows from Theorem \ref{thm:3.1} that the monotone version of Algorithm~\ref{algo1} takes at most $\mathcal{O}\left(\epsilon^{-\left(1+\frac{1}{\theta_{\min}}\right)}\right)$ iterations to generate a $\epsilon$-approximate Pareto critical point of (\ref{eq:2.1})-(\ref{eq:2.2}).

Let us consider now the choice
\begin{equation}
\nu_{k,\ell}=\nu_{k}\equiv\left\{\begin{array}{ll}0,&\text{if}\,\,k=0,\\ 
(1-\delta_{k})\left(F(x_{k-1})+\nu_{k-1}-F(x_{k})\right),&\text{if}\,\,k\geq 1,\end{array}\right.\quad\text{for all $\ell$}
\label{eq:4.2}
\end{equation}
with
\begin{equation}
\text{$\delta_{k}\in [\delta_{\min},1]$ for all $k$, and $\delta_{\min}\in (0,1)$.}
\label{eq:4.3}
\end{equation}
For the choice specified by (\ref{eq:4.2}) and (\ref{eq:4.3}), the next lemma establishes that $\left\{[\nu_{k}]_{i}\right\}_{k\geq 0}$ is summable for all $i\in I$. The proof is an adaptation of the proof of Lemma Theorem 4 in \cite{GeovaniSachs1}.
\vspace{0.2cm}
\begin{lemma}
\label{lem:4.1}
Suppose that A1-A3 hold and let $\left\{x_{k}\right\}_{k\geq 0}$ be generated by  Algorithm~\ref{algo1}. If $\left\{\nu_{k}\right\}_{k\geq 0}$ is defined by (\ref{eq:4.2}) and (\ref{eq:4.3}), then $\nu_{k}\in\mathbb{R}^{m}_{+}$ for all $k$ and 
\begin{equation}
\sum_{k=0}^{+\infty}\left[\nu_{k}\right]_{i}<+\infty\quad\text{for all $i\in I$}.
\label{eq:4.4}
\end{equation}
\end{lemma}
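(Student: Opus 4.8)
The plan is to prove the two assertions in turn: first that the vector $\nu_{k}$ prescribed by (\ref{eq:4.2}) indeed lies in $\mathbb{R}^{m}_{+}$ (so that it is a legitimate choice in Step 2.2), and then that it is summable coordinate-wise. The nonnegativity will come from the acceptance test (\ref{eq:3.3}) at the previous iteration, and the summability from a telescoping argument applied to a suitable auxiliary sequence.

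\medskip

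\noindent\emph{Nonnegativity.} I would argue by induction on $k$. For $k=0$ we have $\nu_{0}=0\in\mathbb{R}^{m}_{+}$. Assume $\nu_{k-1}\in\mathbb{R}^{m}_{+}$, so that the choice $\nu_{k-1,\ell}\equiv\nu_{k-1}$ was admissible and iteration $k-1$ produced $x_{k}=x_{k-1}+\alpha_{k-1}d(x_{k-1})$ with $\alpha_{k-1}=\beta^{\ell_{k-1}}$. By Step 3, $\nu_{k-1}=\nu_{k-1,\ell_{k-1}}$, so (\ref{eq:3.3}) at iteration $k-1$ gives, for every $i\in I$,
\begin{equation*}
f_{i}(x_{k})\leq f_{i}(x_{k-1})+\rho\alpha_{k-1}\nabla f_{i}(x_{k-1})^{T}d(x_{k-1})+[\nu_{k-1}]_{i}.
\end{equation*}
Since (\ref{eq:3.1}) forces $\nabla f_{i}(x_{k-1})^{T}d(x_{k-1})\leq\max_{j\in I}\{\nabla f_{j}(x_{k-1})^{T}d(x_{k-1})\}\leq 0$, the middle term is nonpositive, hence $f_{i}(x_{k})\leq f_{i}(x_{k-1})+[\nu_{k-1}]_{i}$ for all $i$, i.e. $F(x_{k-1})+\nu_{k-1}-F(x_{k})\in\mathbb{R}^{m}_{+}$. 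Because (\ref{eq:4.3}) gives $1-\delta_{k}\geq 0$, it follows that $\nu_{k}=(1-\delta_{k})\bigl(F(x_{k-1})+\nu_{k-1}-F(x_{k})\bigr)\in\mathbb{R}^{m}_{+}$, closing the induction.

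\medskip

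\noindent\emph{Summability.} Fix $i\in I$ and set $b_{k}:=f_{i}(x_{k})+[\nu_{k}]_{i}$. Substituting (\ref{eq:4.2}) into this definition gives, for $k\geq 1$,
\begin{equation*}
b_{k}=f_{i}(x_{k})+(1-\delta_{k})\bigl(b_{k-1}-f_{i}(x_{k})\bigr)=\delta_{k}f_{i}(x_{k})+(1-\delta_{k})b_{k-1},
\end{equation*}
i.e. $b_{k}$ is a convex combination of $f_{i}(x_{k})$ and $b_{k-1}$, with $b_{0}=f_{i}(x_{0})$. Two consequences: (i) since $f_{i}(x_{k})\geq f_{i}^{*}$ by A2 and $b_{0}\geq f_{i}^{*}$, induction gives $b_{k}\geq f_{i}^{*}$ for all $k$; (ii) from the inequality $f_{i}(x_{k})\leq b_{k-1}$ established above and $\delta_{k}>0$, we get $b_{k}-b_{k-1}=\delta_{k}\bigl(f_{i}(x_{k})-b_{k-1}\bigr)\leq 0$, so $\{b_{k}\}$ is nonincreasing. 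Being nonincreasing and bounded below, $\{b_{k}\}$ converges, and therefore $\sum_{k\geq 1}(b_{k-1}-b_{k})=b_{0}-\lim_{k}b_{k}\leq f_{i}(x_{0})-f_{i}^{*}$. Finally, from (\ref{eq:4.2}) we have $[\nu_{k}]_{i}=(1-\delta_{k})(b_{k-1}-f_{i}(x_{k}))$ while $b_{k-1}-b_{k}=\delta_{k}(b_{k-1}-f_{i}(x_{k}))$; dividing (valid since $\delta_{k}\geq\delta_{\min}>0$) yields
\begin{equation*}
[\nu_{k}]_{i}=\frac{1-\delta_{k}}{\delta_{k}}(b_{k-1}-b_{k})\leq\frac{1-\delta_{\min}}{\delta_{\min}}(b_{k-1}-b_{k}),
\end{equation*}
because $t\mapsto(1-t)/t$ is nonincreasing on $(0,1]$ and $b_{k-1}-b_{k}\geq 0$. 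Summing over $k$ and telescoping gives $\sum_{k=0}^{+\infty}[\nu_{k}]_{i}\leq\frac{1-\delta_{\min}}{\delta_{\min}}\bigl(f_{i}(x_{0})-f_{i}^{*}\bigr)<+\infty$, which is (\ref{eq:4.4}).

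\medskip

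\noindent The only real obstacle is recognizing the auxiliary sequence $b_{k}=f_{i}(x_{k})+[\nu_{k}]_{i}$ and the fact that (\ref{eq:4.2}) turns it into a convex combination; once that is seen, monotonicity, boundedness below, and the telescoping bound are routine. Minor care is needed with the degenerate case $\delta_{k}=1$ (then $[\nu_{k}]_{i}=0$ and the bound is trivial) and with the fact that $\delta_{k}\geq\delta_{\min}>0$ is precisely what prevents division by zero and makes $(1-\delta_{k})/\delta_{k}$ uniformly bounded; note also that A3 is not needed here, since (\ref{eq:4.4}) is strictly stronger.
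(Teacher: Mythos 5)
Your proof is correct and follows essentially the same route as the paper's: both hinge on the observation that the Armijo-type condition (\ref{eq:3.3}) makes $F(x_{k-1})+\nu_{k-1}-F(x_{k})$ nonnegative (hence $\nu_{k}\in\mathbb{R}^{m}_{+}$) and then telescope to the identical bound $\sum_{k}[\nu_{k}]_{i}\leq\frac{1-\delta_{\min}}{\delta_{\min}}(f_{i}(x_{0})-f_{i}^{*})$. Your packaging via the monotone, bounded-below auxiliary sequence $b_{k}=f_{i}(x_{k})+[\nu_{k}]_{i}$ (the paper's $C_{k}$) and the exact identity $[\nu_{k}]_{i}=\frac{1-\delta_{k}}{\delta_{k}}(b_{k-1}-b_{k})$ is a slightly cleaner rearrangement of the paper's summed-inequality cancellation, and your remark that A3 is not actually used is accurate.
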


\begin{proof}
By (\ref{eq:4.2}), $\nu_{0}=0\in\mathbb{R}^{m}_{+}$. In view of (\ref{eq:3.3}), for all $i\in I$ we have
\begin{equation*}
    f_{i}(x_{k})+[\nu_{k}]_{i}-f_{i}(x_{k+1})\geq\rho\alpha_{k}\left(-\nabla f_{i}(x_{k})^{T}d(x_{k})\right).
\end{equation*}
Consequently, it follows from (\ref{eq:4.3}) that
\begin{equation*}
    (1-\delta_{k+1})\left(f_{i}(x_{k})+[\nu_{k}]_{i}-f_{i}(x_{k+1})\right)\geq (1-\delta_{k+1})\rho\alpha_{k}\left(-\nabla f_{i}(x_{k})^{T}d(x_{k})\right).
\end{equation*}
Then, by (\ref{eq:4.2}) and (\ref{eq:3.13}) we get
\begin{eqnarray}
\left[\nu_{k+1}\right]_{i}&=& (1-\delta_{k+1})\left(f_{i}(x_{k})+\left[\nu_{k}\right]_{i}-f_{i}(x_{k+1})\right)\label{eq:extra4}\\
&\geq & (1-\delta_{k+1})\rho\alpha_{k}\left(-\nabla f_{i}(x_{k})^{T}d(x_{k})\right)\nonumber\\
&\geq & c_{1}(1-\delta_{k+1})\rho\alpha_{k}\|s(x_{k})\|^{2}.\nonumber
\end{eqnarray}
Therefore, $\nu_{k+1}\in\mathbb{R}^{m}_{+}$. Now, let $N\geq 1$. Combining (\ref{eq:4.3}) and (\ref{eq:extra4}) we get
\begin{eqnarray*}
\sum_{k=0}^{N}\left[\nu_{k}\right]_{i}&=&\sum_{k=0}^{N-1}\left[\nu_{k+1}\right]_{i} =  \sum_{k=0}^{N-1}(1-\delta_{k+1})\left(f_{i}(x_{k})+\left[\nu_{k}\right]_{i}-f_{i}(x_{k+1})\right)\\
                                                         & \leq &\sum_{k=0}^{N-1}\left(1-\delta_{\min}\right)(f_{i}(x_{k})-f_{i}(x_{k+1})+[\nu_{k}]_{i})\\
                                                         & =    & \left(1-\delta_{\min}\right)(f_{i}(x_{0})-f_{i}(x_{N}))+\sum_{k=0}^{N-1}[\nu_{k}]_{i}-\delta_{\min}\sum_{k=0}^{N-1}[\nu_{k}]_{i}\\
&\leq & \left(1-\delta_{\min}\right)(f_{i}(x_{0})-f_{i}^{*})+\sum_{k=0}^{N}[\nu_{k}]_{i}-\delta_{\min}\sum_{k=0}^{N-1}[\nu_{k}]_{i},
\end{eqnarray*}
where the last inequality is due to A2 and thet fact that $[\nu_{N}]_{i}\geq 0$.
Thus
\begin{equation*}
\delta_{\min}\sum_{k=0}^{N-1}[\nu_{k}]_{i}\leq (1-\delta_{\min})(f_{i}(x_{0})-f_{i}^{*})
\end{equation*}
and so
\begin{equation*}
\sum_{k=0}^{N-1}[\nu_{k}]_{i}\leq\left(\dfrac{1-\delta_{\min}}{\delta_{\min}}\right)(f_{i}(x_{0})-f_{i}^{*}).
\end{equation*}
Since $i\in I$ and $N\geq 1$ were taken arbitrarily, this means that (\ref{eq:4.4}) is true.
\end{proof}

In view of Lemma \ref{lem:4.1}, the sequence $\left\{\nu_{k}\right\}$ defined by (\ref{eq:4.2}) and (\ref{eq:4.3}) satisfies A3. Moreover, for this sequence, (\ref{eq:3.14}) holds with
\begin{equation*}
C_{i}(\delta)=\left(\dfrac{1-\delta_{\min}}{\delta_{\min}}\right)(f_{i}(x_{0})-f_{i}^{*})\delta^{-1}.
\end{equation*} 
Therefore, it follows from Theorem \ref{thm:3.1} that the corresponding instance of Algorithm~\ref{algo1} takes at most  $\mathcal{O}\left(\epsilon^{-\left(1+\frac{1}{\theta_{\min}}\right)}\right)$ iterations to generate a $\epsilon$-approximate Pareto critical point of (\ref{eq:2.1})-(\ref{eq:2.2}).

Notice that the sequence $\left\{\nu_{k}\right\}_{k\geq 0}$ given by (\ref{eq:4.2}) and (\ref{eq:4.3}) is implicitly defined by the choice of the sequence $\left\{\delta_{k}\right\}\subset [\delta_{\min},1]$. One possibility is to use 
\begin{equation}
\delta_{k}=\dfrac{1}{Q_{k}},\quad\text{for all $k\geq 1$},
\label{eq:4.6}
\end{equation}
where 
\begin{equation}
Q_{0}=1\quad\text{and}\quad Q_{k+1}=\eta_{k}Q_{k}+1
\label{eq:4.7}
\end{equation}
for a given sequence $\eta_{k}\in [\eta_{\min},\eta_{\max}]$ with $0\leq\eta_{\min}\leq\eta_{\max}<1$. In this case we have
\begin{equation*}
Q_{k+1}=1+\sum_{j=0}^{k}\Pi_{i=0}^{j}\eta_{k-j}\leq\sum_{j=0}^{+\infty}\eta_{\max}^{j}=\dfrac{1}{1-\eta_{\max}}
\end{equation*}
and so
\begin{equation*}
\delta_{k+1}\geq 1-\eta_{\max}\equiv \delta_{\min}.
\end{equation*}
By (\ref{eq:4.2}), we also have
\begin{equation}
F(x_{k+1})+\nu_{k+1}=(1-\delta_{k+1})(F(x_{k})+\nu_{k})+\delta_{k+1}F(x_{k+1}).
\label{eq:4.8}
\end{equation}
Thus, denoting
\begin{equation*}
C_{k}=F(x_{k})+\nu_{k},
\end{equation*}
it follows from (\ref{eq:4.8}), (\ref{eq:4.6}) and (\ref{eq:4.7}) that
\begin{equation*}
C_{k+1}=\dfrac{\eta_{k}Q_{k}C_{k}+F(x_{k+1})}{Q_{k+1}}.
\end{equation*}
Moreover, using this notation, condition (\ref{eq:3.3}) can be rewritten as 
\begin{equation*}
f_{i}(x_{k}+\beta^{\ell}d(x_{k}))\leq [C_{k}]_{i}+\rho\beta^{\ell}\nabla f_{i}(x_{k})^{T}d(x_{k})\quad\text{for all $i\in I$}.
\end{equation*}
This means that Algorithm~\ref{algo1} with $\left\{\nu_{k}\right\}$ given by (\ref{eq:4.2}), (\ref{eq:4.6}) and (\ref{eq:4.7}) reduces to:
\begin{itemize}
\item an instance of the nonmonotone projected gradient method proposed in \cite{FS}, when $m_{k}=0$ for all $k\geq 0$; and
\item the variant of Algorithm 6 proposed in \cite{MITA} for the case $\Omega=\mathbb{R}^{n}$, with nonmonotone term inspired by \cite{ZH}.
\end{itemize}

\noindent In particular, it follows from our analysis that these methods also possess an upper complexity bound of  $\mathcal{O}\left(\epsilon^{-\left(1+\frac{1}{\theta_{\min}}\right)}\right)$ iterations. 

Note that multiobjective variants of the nonmonotone line search of \cite{Grippo} can also be seen as particular instances of Algorithm 1 with
\begin{equation*}
\left[\nu_{k,\ell}\right]_{i}=[\nu_{k}]_{i}=\max_{0\leq j\leq M(k)}\,f_{i}(x_{k-j})-f_{i}(x_{k}),\quad \text{for}\quad i=1,\ldots,m,
\end{equation*}
where $M(k)=\min\left\{k,M\right\}$. However, it is not clear whether the corresponding sequence $\left\{\nu_{k}\right\}_{k\geq 0}$ is summable. Thus, the worst-case complexity of this variant remains unknown to us.

\section{A New Nonmonotone Method for Multiobjective Optimization}\label{sec4}

As we saw in the last section, with different choices of $\nu_{k,\ell}$ at Step 2.2 of Algorithm~\ref{algo1}, we obtain different methods. From our analysis, to have a globally convergent method it is enough to select $\nu_{k,\ell}$ such that the corresponding sequence $\left\{\nu_{k}\right\}$ satisfies assumption A3. The next lemma establishes that A3 is very mild, in the sense that it holds for any sequence $\left\{\nu_{k}\right\}$ with $\lim_{k\to +\infty}\nu_{k}=0$.
\vspace{0.2cm}
\begin{lemma}
\label{lem:5.1}
Let $\left\{\nu_{k}\right\}_{k\geq 0}\subset\mathbb{R}^{m}_{+}$ with $\nu_{k}\to 0$. Then $\left\{\nu_{k}\right\}_{k\geq 0}$ satisfies assumption A3.
\end{lemma}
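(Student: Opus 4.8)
The claim is precisely the statement that the Cesàro averages of a convergent sequence of reals converge to the same limit, applied coordinatewise. So the plan is to fix an index $i\in I$ and show $\frac{1}{T}\sum_{k=0}^{T-1}[\nu_k]_i\to 0$ by the standard $\varepsilon/2$ splitting argument.

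First I would record two elementary facts. Since $\nu_k\to 0$, for every $\varepsilon>0$ there is $N=N(\varepsilon)\in\mathbb{N}$ such that $[\nu_k]_i<\varepsilon/2$ for all $k\geq N$. Also, a convergent sequence is bounded, so the partial sum $S_N:=\sum_{k=0}^{N-1}[\nu_k]_i$ is a finite, fixed nonnegative number. Then for any $T>N$ I split
\begin{equation*}
\frac{1}{T}\sum_{k=0}^{T-1}[\nu_k]_i=\frac{1}{T}\sum_{k=0}^{N-1}[\nu_k]_i+\frac{1}{T}\sum_{k=N}^{T-1}[\nu_k]_i\leq\frac{S_N}{T}+\frac{T-N}{T}\cdot\frac{\varepsilon}{2}\leq\frac{S_N}{T}+\frac{\varepsilon}{2},
\end{equation*}
where the last step uses $(T-N)/T\leq 1$ and, to get the middle bound, that each term with index $k\geq N$ is at most $\varepsilon/2$ and each term with $k<N$ is nonnegative.

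Next I would choose $T$ large: since $S_N$ is fixed, there is $T_0=T_0(\varepsilon)>N$ with $S_N/T<\varepsilon/2$ for all $T\geq T_0$. Combining with the displayed inequality gives $0\leq\frac{1}{T}\sum_{k=0}^{T-1}[\nu_k]_i<\varepsilon$ for all $T\geq T_0$ (the lower bound is immediate from $\nu_k\in\mathbb{R}^m_+$). Since $\varepsilon>0$ was arbitrary, this proves $\lim_{T\to+\infty}\frac{1}{T}\sum_{k=0}^{T-1}[\nu_k]_i=0$. As $i\in I$ was arbitrary, assumption A3 holds.

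There is no real obstacle here; the proof is entirely routine. The only point worth being careful about is handling the ``head'' of the sum — making sure the fixed finite quantity $S_N$ is divided by $T\to\infty$ rather than absorbed incorrectly — and using the nonnegativity of $\nu_k$ so that no absolute values or sign bookkeeping are needed.
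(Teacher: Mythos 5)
Your proof is correct and follows essentially the same route as the paper's: both split the Ces\`aro average at a cutoff index beyond which $[\nu_k]_i$ is at most $\varepsilon/2$ (resp.\ $\delta/2$), bound the finite head by a fixed quantity divided by $T$, and let $T\to\infty$. The only cosmetic difference is that the paper makes the threshold $C_i(\delta)$ explicit (in terms of a uniform bound $M_i$ and the cutoff $\xi_i(\delta/2)$) because it reuses that expression later in its complexity bound, whereas you only assert the existence of $T_0(\varepsilon)$, which suffices for the lemma as stated.
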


\begin{proof}
Let $i\in I$. By assumption, $\lim_{k\to +\infty}[\nu_{k}]_{i}=0$. Thus, given $\delta>0$, there exists $\xi_{i}(\delta/2)\in\mathbb{N}\setminus\left\{0\right\}$ such that
\begin{equation}
[\nu_{k}]_{i}\leq\dfrac{\delta}{2}\quad\text{for all}\,\,k\geq\xi_{i}(\delta/2).
\label{eq:5.1}
\end{equation}
Moreover, there exists $M_{i}>0$ such that
\begin{equation}
[\nu_{k}]_{i}\leq M_{i}\quad\text{for all}\,\,k\geq 0.
\label{eq:5.2}
\end{equation}
Consider
\begin{equation}
C_{i}(\delta)=\max\left\{\dfrac{2\xi_{i}(\delta/2)M_{i}}{\delta},1+\xi_{i}(\delta/2)\right\}.
\label{eq:5.3}
\end{equation}
If $T\geq C_{i}(\delta)$ we have
\begin{eqnarray*}
\dfrac{1}{T}\sum_{k=0}^{T-1}[\nu_{k}]_{i}&=& \dfrac{1}{T}\left(\sum_{k=0}^{\xi_{i}(\delta/2)-1}[\nu_{k}]_{i}\right)+\dfrac{1}{T}\left(\sum_{k=\xi_{i}(\delta/2)}^{T-1}[\nu_{k}]_{i}\right)\\
&\leq &  \dfrac{1}{T}\left(\sum_{k=0}^{\xi_{i}(\delta/2)-1}M_{i}\right)+\dfrac{1}{T}\left(\sum_{k=\xi_{i}(\delta/2)}^{T-1}\dfrac{\delta}{2}\right)\\
&\leq &  \dfrac{1}{T}\xi_{i}(\delta/2)M_{i}+\dfrac{1}{T}\left(\sum_{k=0}^{T-1}\dfrac{\delta}{2}\right)\\
& = & \dfrac{\delta}{2}+\dfrac{\delta}{2}\\
& = & \delta.
\end{eqnarray*}
This shows that
\begin{equation*}
\lim_{T\to +\infty}\dfrac{1}{T}\sum_{k=0}^{T-1}[\nu_{k}]_{i}=0.
\end{equation*}
Since $i\in I$ was taken arbitrarily, we conclude that $\left\{\nu_{k}\right\}_{k\geq 0}$ satisfies A3.
\end{proof}

It follows from Lemma \ref{lem:5.1} and Corollary \ref{cor:3.1} that any instance of Algorithm~\ref{algo1} with $\nu_{k}\to 0$ is globally convergent for the problem class specified by assumptions A1 and A2. This gives a great deal of freedom for the development of new nonmonotone methods for multiobjective optimization problems. As an example, let us consider the following new instance of Algorithm~\ref{algo1}.
\newpage
\begin{algorithm}[ht!]
\caption{Metropolis-Based Nonmonotone Line Search Method}\label{algo2}
\noindent\textbf{Step 0.} Choose $x_{0}\in\Omega$, $\beta,\rho\in (0,1)$, $c_{1},c_{2}>0$, $\sigma\in\mathbb{R}^{m}_{+}$, $\gamma>0$, and $\left\{\tau_{k}\right\}_{k\geq 0}\subset\mathbb{R}_{++}$ with $\tau_{k}\to 0$. Set $k:=0$.
\\[0.2cm]
\noindent\textbf{Step 1.} Compute $d(x_{k})\in\Omega-x_{k}$ such that
\begin{equation*}
\max_{i\in I}\left\{\nabla f_{i}(x_{k})^{T}d(x_{k})\right\}\leq -c_{1}\|s(x_{k})\|^{2}
\end{equation*} 
and
\begin{equation*}
\|d(x_{k})\|\leq c_{2}\|s(x_{k})\|.
\end{equation*}
\\[0.2cm]
\noindent\textbf{Step 2.1.} Set $\ell:=0$.
\\[0.2cm]
\noindent\textbf{Step 2.2.} Compute $x_{k,\ell}^{+}=x_{k}+\beta^{\ell}d(x_{k})$ and define
\begin{equation}
[\nu_{k,\ell}]_{i}=\sigma_{i}\text{exp}\left(-\dfrac{\max\left\{\gamma,f_{i}(x_{k,\ell}^{+})-f_{i}(x_{k})\right\}}{\tau_{k}}\right),\quad\forall i\in I.
\label{eq:5.4}
\end{equation}
If 
\begin{equation*}
f_{i}(x_{k,\ell}^{+})\leq f_{i}(x_{k})+\rho\beta^{\ell}\nabla f_{i}(x_{k})^{T}d(x_{k})+\left[\nu_{k,\ell}\right]_{i}\quad\forall i\in I,
\end{equation*}
set $\ell_{k}=\ell$ and go to Step 3. Otherwise, set $\ell:=\ell+1$ and repeat Step 2.2.
\\[0.2cm]
\noindent\textbf{Step 3.} Set $\nu_{k}=\nu_{k,\ell_{k}}$, $\alpha_{k}=\beta^{\ell_{k}}$, $x_{k+1}=x_{k,\ell_{k}}^{+}$, $k:=k+1$ and go back to Step 1.
\end{algorithm}

Algorithm~\ref{algo2} is a generalization of the Metropolis-based nonmonotone method proposed in \cite{GeovaniSachs} for single-objective optimization. As a consequence of Corollary \ref{cor:3.1} we have the following global convergence result for Algorithm~\ref{algo2}. 
\vspace{0.2cm}
\begin{theorem}
\label{thm:5.1}
Suppose that A1-A2 hold, and let $\left\{x_{k}\right\}_{k\geq 0}$ be a sequence generated by Algorithm~\ref{algo2}. Then, either there exists $\bar{k}$ such that $d(x_{\bar{k}})=0$ or
\begin{equation*}
\liminf_{k\to +\infty}\|s(x_{k})\|=0.
\end{equation*}
\end{theorem}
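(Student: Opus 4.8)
The plan is to show that Algorithm~\ref{algo2} is simply a particular instance of Algorithm~\ref{algo1}, so that the conclusion follows immediately from Corollary~\ref{cor:3.1}. To do this, I would first verify that the vectors $\nu_{k,\ell}$ defined in~(\ref{eq:5.4}) are admissible choices in Step~2.2 of Algorithm~\ref{algo1}: since $\sigma\in\mathbb{R}^{m}_{+}$, $\tau_{k}>0$, and the exponential is positive, we have $[\nu_{k,\ell}]_{i}=\sigma_{i}\exp(-\max\{\gamma,f_{i}(x_{k,\ell}^{+})-f_{i}(x_{k})\}/\tau_{k})\geq 0$ for all $i\in I$, hence $\nu_{k,\ell}\in\mathbb{R}^{m}_{+}$. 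Also, taking $m_{k}=0$ for all $k$ makes the cardinality condition~(\ref{eq:cardinality}) vacuous, so the acceptance test in Step~2.2 of Algorithm~\ref{algo2} coincides with~(\ref{eq:cardinality})--(\ref{eq:3.3}). Thus Algorithm~\ref{algo2} is exactly Algorithm~\ref{algo1} with $m_{k}\equiv 0$ and the particular rule~(\ref{eq:5.4}) for $\nu_{k,\ell}$.

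Next I would establish that the resulting sequence $\{\nu_{k}\}_{k\geq 0}$, where $\nu_{k}=\nu_{k,\ell_{k}}$, converges to zero. The key bound is that for every $k$ and every $i\in I$,
\begin{equation*}
0\leq[\nu_{k}]_{i}=\sigma_{i}\exp\!\left(-\dfrac{\max\{\gamma,f_{i}(x_{k}^{+})-f_{i}(x_{k})\}}{\tau_{k}}\right)\leq\sigma_{i}\exp\!\left(-\dfrac{\gamma}{\tau_{k}}\right),
\end{equation*}
because $\max\{\gamma,f_{i}(x_{k,\ell_{k}}^{+})-f_{i}(x_{k})\}\geq\gamma$ and the exponential is decreasing. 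Since $\gamma>0$ is fixed and $\tau_{k}\to 0$, we have $\gamma/\tau_{k}\to+\infty$, hence $\sigma_{i}\exp(-\gamma/\tau_{k})\to 0$, and by the squeeze theorem $[\nu_{k}]_{i}\to 0$ for each $i\in I$. Therefore $\nu_{k}\to 0$.

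With $\nu_{k}\to 0$ in hand, Lemma~\ref{lem:5.1} gives that $\{\nu_{k}\}_{k\geq 0}$ satisfies assumption~A3. Since A1 and A2 are assumed, all three hypotheses A1--A3 of Corollary~\ref{cor:3.1} hold for this instance of Algorithm~\ref{algo1}. Invoking Corollary~\ref{cor:3.1} yields that either there exists $\bar{k}$ with $s(x_{\bar{k}})=0$ or $\liminf_{k\to+\infty}\|s(x_{k})\|=0$; and by Lemma~\ref{lem:2.1}(b)--(c), $s(x_{\bar{k}})=0$ is equivalent to $x_{\bar{k}}$ being Pareto critical, which (given the direction conditions~(\ref{eq:3.1})--(\ref{eq:3.2})) forces $d(x_{\bar{k}})=0$ as stated. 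I do not foresee a genuine obstacle here: the only point requiring a little care is checking that~(\ref{eq:5.4}) is well-defined at every inner iteration $\ell$ (it is, since $\tau_{k}>0$ and $f_{i}$ is finite-valued) and that Algorithm~\ref{algo2} is well-defined, i.e. the inner loop terminates; but this follows from Lemma~\ref{lem:3.1} exactly as for Algorithm~\ref{algo1}, since~(\ref{eq:monotone}) with $[\nu_{k,\ell}]_{i}\geq 0$ implies the acceptance test, so the argument is essentially bookkeeping.
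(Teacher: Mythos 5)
Your proof is correct and follows essentially the same route as the paper: reduce to Corollary~\ref{cor:3.1} via Lemma~\ref{lem:5.1} by showing $\nu_{k}\to 0$ from the bound $0\leq[\nu_{k}]_{i}\leq\sigma_{i}\exp(-\gamma/\tau_{k})$ with $\tau_{k}\to 0$. Your added remark that $s(x_{\bar{k}})=0$ forces $d(x_{\bar{k}})=0$ via condition~(\ref{eq:3.2}) is a small but welcome piece of bookkeeping that the paper's proof leaves implicit.
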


\begin{proof}
In view of Corollary \ref{cor:3.1} and Lemma \ref{lem:5.1} it is enough to show that $\left\{\nu_{k}\right\}_{k\geq 0}$ generated by Algorithm~\ref{algo2} converges to $0\in\mathbb{R}^{m}$. By Steps 2.2 and 3 in Algorithm~\ref{algo2}, for all $i\in I$ and $k\geq 0$, we have
\begin{equation*}
0\leq [\nu_{k}]_{i}=\sigma_{i}\text{exp}\left(-\dfrac{\max\left\{\gamma,f_{i}(x_{k+1})-f_{i}(x_{k})\right\}}{\tau_{k}}\right)\leq\sigma_{i}\text{exp}\left(-\frac{\gamma}{\tau_{k}}\right).
\end{equation*}
Since $\gamma,\tau_{k}>0$ and $\tau_{k}\to 0$, it follows that
\begin{equation*}
\lim_{k\to +\infty}[\nu_{k}]_{i}=0,\quad\forall i\in I,
\end{equation*}
and so $\nu_{k}\to 0$.
\end{proof}

From the proof Lemma \ref{lem:5.1} we see that if $\nu_{k}\to 0$ then (\ref{eq:3.14}) holds for
\begin{equation*}
C_{i}(\delta)=\max\left\{\dfrac{2\xi_{i}(\delta/2)M_{i}}{\delta},1+\xi_{i}(\delta/2)\right\},
\end{equation*} 
where $M_{i}$ is a uniform upper bound to $\left\{[\nu_{k}]_{i}\right\}_{k\geq 0}$, and $\xi_{i}(\delta/2)$ is any positive integer such that
\begin{equation*}
[\nu_{k}]_{i}\leq\dfrac{\delta}{2},\quad\forall k\geq \xi_{i}(\delta/2).
\end{equation*}
Thus, when $\nu_{k}\to 0$, it follows from Theorem \ref{thm:3.1} that Algorithm 1 takes no more than
\begin{equation}
\left\lceil \min_{i\in I}\max\left\{\dfrac{4\xi_{i}\left(\frac{\kappa_{2}}{4}\epsilon^{\left(1+\frac{1}{\theta_{\min}}\right)}\right)M_{i}}{\kappa_{2}\epsilon^{\left(1+\frac{1}{\theta_{min}}\right)}},1+\xi_{i}\left(\frac{\kappa_{2}}{4}\epsilon^{\left(1+\frac{1}{\theta_{\min}}\right)}\right),\frac{2(f_{i}(x_{0})-f_{i}^{*})}{\kappa_{2}\epsilon^{\left(1+\frac{1}{\theta_{\min}}\right)}}\right\}\right\rceil
\label{eq:bound}
\end{equation}
iterations to find a $\epsilon$-approximate Pareto critical point of (\ref{eq:2.1})-(\ref{eq:2.2}). Therefore, to obtain an explicit iteration complexity bound, all that we need to do is to estimate the rate of decay of $\left\{[\nu_{k}]_{i}\right\}_{k\geq 0}$, which will allow the identification of $M_{i}$ and $\xi_{i}(\delta/2)$. From the proof of Theorem \ref{thm:5.1}, we know that the sequence $\left\{[\nu_{k}]_{i}\right\}_{k\geq 0}$ in Algorithm 2 satisfies
\begin{equation*}
0\leq [\nu_{k}]_{i}\leq\sigma_{i}\text{exp}\left(-\frac{\gamma}{\tau_{k}}\right),\quad\forall k\geq 0.
\end{equation*}
Let us consider the choice $\tau_{k}=1/\ln(k+1)$. In this case, we get
\begin{equation}
[\nu_{k}]_{i}\leq\sigma_{i}\text{exp}(-\gamma\ln(k+1))=\sigma_{i}\text{exp}\left(\ln\left((k+1)^{-\gamma}\right)\right)=\dfrac{\sigma_{i}}{(k+1)^{\gamma}}.
\label{eq:decay}
\end{equation}
This implies that
\begin{equation}
[\nu_{k}]_{i}\leq\sigma_{i}\equiv M_{i},\quad\forall k\geq 0,
\label{eq:cond1}
\end{equation}
and
\begin{equation}
[\nu_{k}]_{i}\leq\dfrac{\delta}{2},\quad\forall k\geq\left(\frac{2\sigma_{i}}{\delta}\right)^{\frac{1}{\gamma}}\equiv\xi_{i}(\delta/2).
\label{eq:cond2}
\end{equation}
Therefore, it follows from (\ref{eq:bound}), (\ref{eq:cond1}) and (\ref{eq:cond2}) that Algorithm 2 with $\tau_{k}=1/\ln(k+1)$ takes no more than $\mathcal{O}\left(\epsilon^{-\left(1+\frac{1}{\theta_{\min}}\right)\left(1+\frac{1}{\gamma}\right)}\right)$ iterations to find a $\epsilon$-approximate Pareto critical point of (\ref{eq:2.1})-(\ref{eq:2.2}). 

For the case $\gamma>1$, an improved complexity bound can be obtained for Algorithm 2. Indeed, if $\gamma>1$, then it follows from (\ref{eq:decay}) that 
\begin{equation*}
\sum_{k=0}^{+\infty}[\nu_{k}]_{i}\leq\sigma_{i}\sum_{k=0}^{+\infty}\dfrac{1}{(k+1)^{\gamma}}\leq\dfrac{\sigma_{i}\gamma}{\gamma-1}.
\end{equation*}
In this case, (\ref{eq:3.14}) is satisfied with $C_{i}(\delta)=\left(\frac{\sigma_{i}\gamma}{\gamma-1}\right)\delta^{-1}$. Consequently, by Theorem \ref{thm:3.1}, Algorithm 2 with $\gamma>1$ and $\tau_{k}=1/\ln(k+1)$ takes at most $\mathcal{O}\left(\epsilon^{-\left(1+\frac{1}{\theta_{\min}}\right)}\right)$ iterations to find a $\epsilon$-approximate Pareto critical point.

\section{Illustrative Numerical Results}\label{sec5}

In \cite{GeovaniSachs}, a variant of Algorithm 2 for single-objective optimization (case $m=1$) showed a remarkable ability to escape non-global local minimizers. Here, we investigate the performance of Algorithm 2 applied to bi-objective optimization problems where one of the objectives has numerous non-global local minimizers. Specifically, we considered  $15$ bi-objective problems of the form
\begin{equation*}
\begin{array}{rc} \min_{x\in\mathbb{R}^{n}}& F(x)=(f_{1}(x),f_{2}(x))\\
                         \text{s.t.} & x\in [-a,a]^{n},
\end{array}
\end{equation*}
with $a=5.12$, 
\begin{equation*}
f_{1}(x)=10n+\sum_{i=1}^{n}\left[x_{1}^{2}-10\cos(2\pi x_{i})\right],\quad\forall x\in\mathbb{R}^{n},\quad\forall x\in\mathbb{R}^{n},
\end{equation*}
and $f_{2}(\,\cdot\,)$ being one of the $15$ functions from the MGH collection \cite{MORE} whose dimension $n$ can be chosen\footnote{Namely, \texttt{Extended Rosenbrock}, \texttt{Extended Powell Singular}, \texttt{Penalty I}, \texttt{Penalty II}, \texttt{Variably Dimensioned}, \texttt{Trigonometric}, \texttt{Discrete Boundary Value}, \texttt{Discrete Integral Equation}, \texttt{Broyden Tridiagonal}, \texttt{Broyden Banded}, \texttt{Brown Almost Linear}, \texttt{Linear}, \texttt{Linear-1}, \texttt{Linear-0}, \texttt{Chebyquad}.}. Function $f_{1}(\,\cdot\,)$ is known as the \textit{Rastrigin function}. This function has a large number of spurious minimizers in the hypercube $[-5.12,5.12]^{n}$ (see, e.g., \cite{RUD}). The following \texttt{Julia} codes were compared:
\begin{itemize}
\item \textbf{M}: the monotone version of Algorithm 1, obtained with $\nu_{k,\ell}=0$ for all $k$ and $\ell$, and $m_{k}=0$ for all $k$.
\item \textbf{N1}: Algorithm 1 with $\nu_{k,\ell}$ given by (\ref{eq:4.2}), (\ref{eq:4.6}) and (\ref{eq:4.7}), $\eta_{k}=0.85/(k+1)$, and $m_{k}=0$ for all $k$.
\item \textbf{N2}: Algorithm 2 with $\tau_{k}=1/\ln(k+1)$, $\gamma=8$, and $\sigma_{i}=|f_{i}(x_{0})|$ for all $i\in I$.
\item \textbf{Nh:} The hybrid-type nonmonotone line search as proposed in \cite{MITA}, which uses $m_{k}=\lceil m/2\rceil$ for all $k$.
\end{itemize}
In all implementations, we considered $d(x_{k})=s(x_{k})$ (for which $c_{1}=c_{2}=1$), and the parameters $\rho=10^{-4}$ and $\beta=0.5$. Focusing on the case $n=4$, for each problem we tested $81$ choices for the starting point $x_{0}\in\mathbb{R}^{4}$, namely, 
\begin{equation*}
x_{0}=\left[-2a+ia,-2a+ja,-2a+ka,-2a+\ell a\right]^{T},\quad i,j,k,\ell \in \left\{1,2,3\right\}.
\end{equation*}
This resulted in a total of $1215$ pairs $(\text{problem},\text{starting point})$. We applied the three solvers in all these pairs with stopping criterion
\begin{equation}\label{cp1}
\|s(x_{k})\|\leq\epsilon\equiv 10^{-4},
\end{equation}
allowing a maximum of $1000$ iterations for each solver. 
All the experiments
were performed with \texttt{Julia} $1.7.2$ on a PC with Intel(R) Core(TM) i7-10510U  with  microprocessor 1.8 GHz and 32 GB RAM. We use \texttt{Gurobi} and \texttt{JuMP} \cite{DunningHuchetteLubin2017} to compute $s(x_{k})$.

Codes are compared using performance profile  \cite{profile}. Given a set of solvers $\mathcal{S}$, and a set of test problems $\mathcal{P}$,  denote by $t_{p,s}>0$ the performance of the solver $s \in\mathcal{S}$ applied to problem $ p \in \mathcal{P}$. Then the performance profile of solver $s$ is the graph of the function   $\gamma_s:[1, \infty) \to [0,1]$ given by
$$
\gamma_s(\tau) = \dfrac{1}{\vert P \vert} \Big|\Big\{p \in\mathcal{P}: \dfrac{t_{p,s}}{\min\{t_{p,s}:s \in \mathcal{S}\}} \leq \tau \Big\}\Big|.
$$
Note that, $\gamma_s(1)$ is the percentage of problems for which solver $s$ wins over the rest of the solvers. 
\subsection{Number of Iterations to achieve $\epsilon$-Pareto Criticality}

Figure \ref{fig:iterations} shows the performance profiles of solvers M, N1, N2 and Nh, considering $\mathcal{P}=\left\{\text{(problem, starting point)}\right\}$ and $t_{p,s}$ as the number of iterations that solver $s$ applied to the pair $p\in\mathcal{P}$ requires to generate $x_{k}$ such that (\ref{cp1}) holds. As we can see, N2 was the only nonmonotone method more efficient than the monotone method. In addition, N1 and N2 were more robust than M and Nh.

\begin{figure}[H]
    \centering
    \includegraphics[scale=0.4]{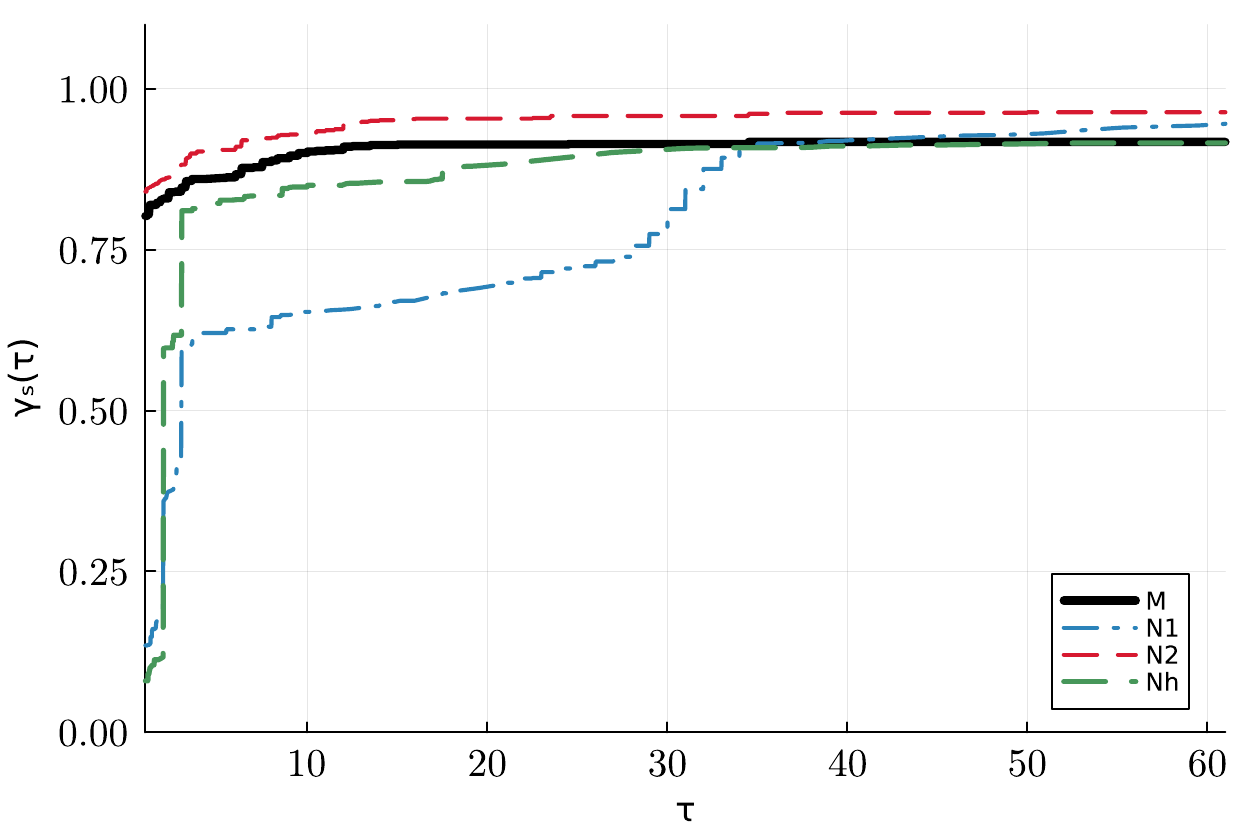}
    \caption{Performance profile with respect to the number of iterations}
    \label{fig:iterations}
\end{figure}

\subsection{Purity Metric}

In the context of multiobjective optimization, other performance measures are also relevant. For example, it is particularly interesting to quantify the ability of the solvers to find Pareto efficient solutions, also known as \textit{Pareto front}. For that, we need first to identify an approximation for this set. By applying solver $s$ to problem $p$ starting from $N$ different initial points, we obtain a set of $N$ approximate Pareto critical points in $\Omega$. Removing from this set all the dominated points\footnote{We say that a point $x_1$ is dominated by a point $x_2$ when $f_i(x_2) \leq f_i(x_1),\; \forall i\in I $   and $f_j(x_2) < f_j(x_1)$ for some $j \in I.$}, we get the set  $PF_{p,s}$. Let $PF_{p}$ be the set obtained by removing all the dominated points from $\cup_{s \in S} PF_{p,s}$. Then, $PF_{p}$ can be seen as an approximation to the Pareto front of problem $p$. The  Purity metric of solver $s$ applied to problem $p$ is defined as 
\begin{equation*}
t_{p,s}=\left\{\begin{array}{ll} 1/\bar{t}_{p,s},&\text{if}\,\,\bar{t}_{p,s}\neq 0,\\
                                              +\infty,&\text{otherwise},
\end{array}
\right.
\end{equation*}
where 
\begin{equation*}
\bar{t}_{p,s}=\dfrac{|PF_{p,s}\cap PF_{p}|}{|PF_{p}|}.
\end{equation*}
When using the Purity metric, as suggested in \cite{MULTISEARCH}, we compare the algorithms in pairs. As we can see in Figure \ref{fig:purity},  our new method N2 outperformed both M, N1 and Nh with respect to the Purity metric.

\newpage

\begin{figure}[h!]
\begin{minipage}[b]{0.75\linewidth}
\centering
    \includegraphics[scale=0.4]{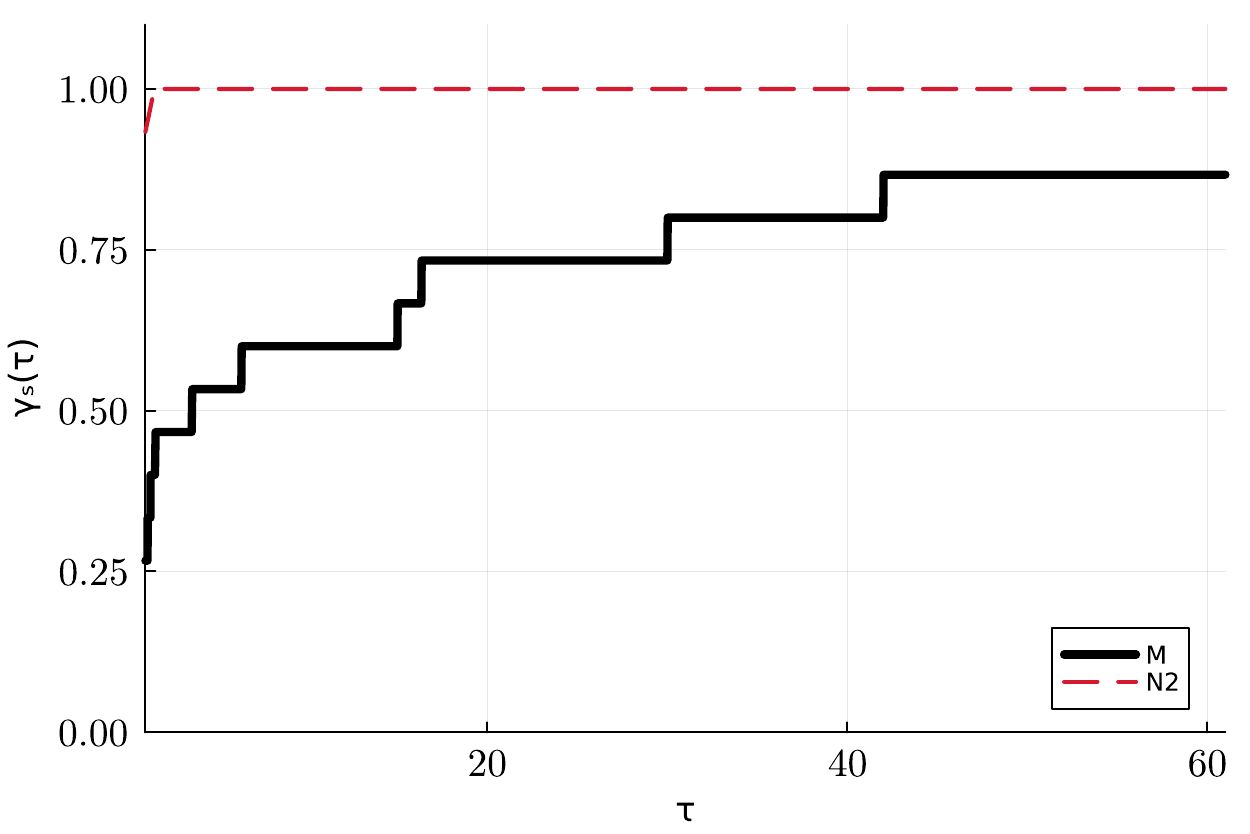}
\end{minipage}
\hfill
\begin{minipage}[b]{0.75\linewidth}
\centering
   \includegraphics[scale=0.4]{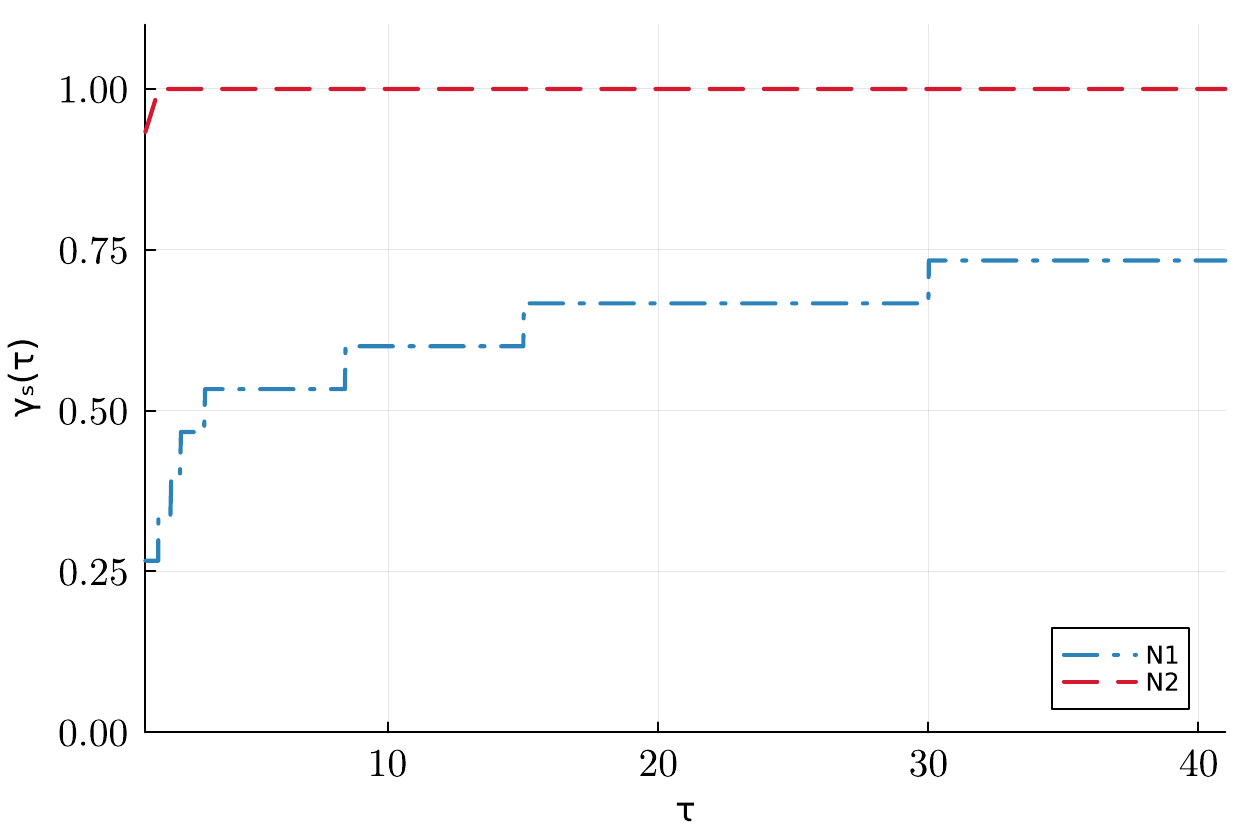}
\end{minipage}
\hfill
\begin{minipage}[b]{0.75\linewidth}
\centering
     \includegraphics[scale=0.4]{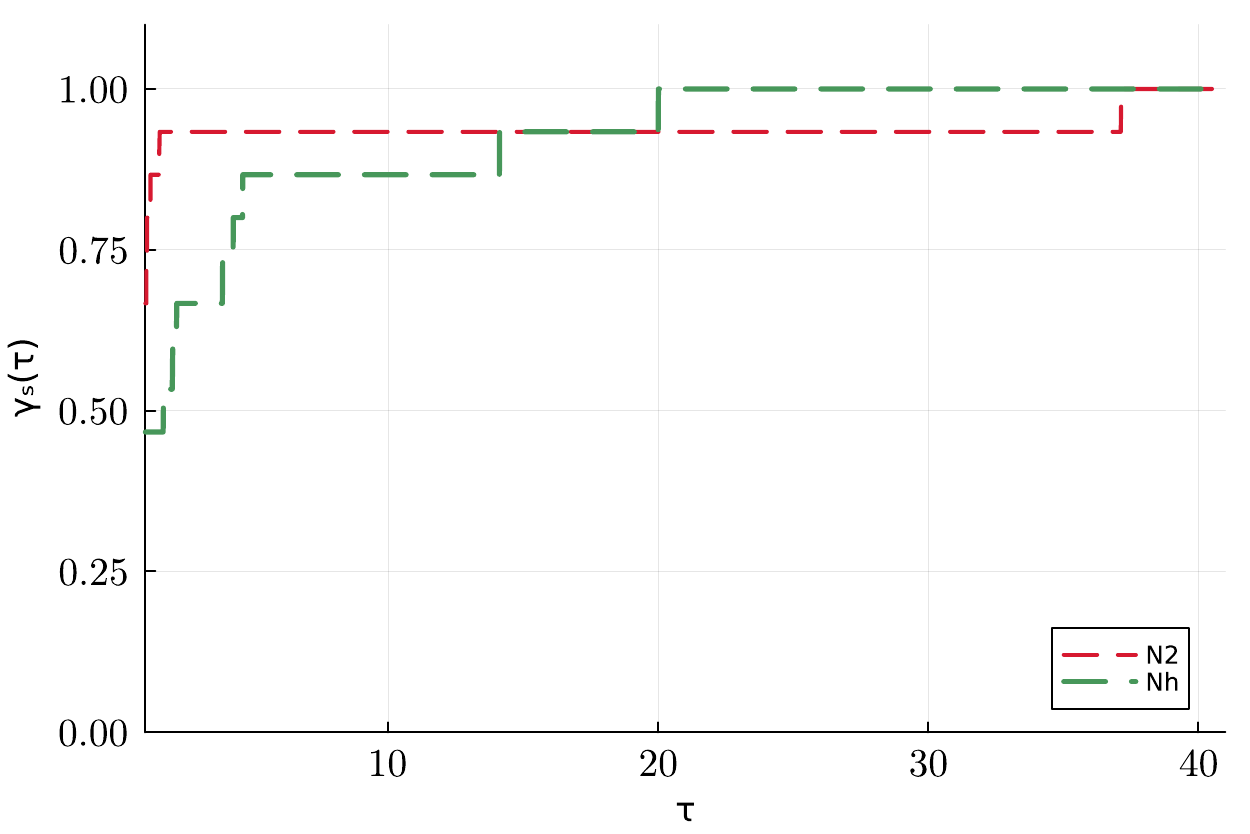}
\end{minipage}
  \caption{Performance profile with respect to the purity metric}
  \label{fig:purity}
\end{figure}

\section{Conclusion}\label{secconc}

In this work, we proposed a universal nonmonotone line search method for nonconvex multiobjective problems with convex constraints. The method is \textit{universal} because it does not require any knowledge about the constants that define the smoothness level of the objectives. Specifically, the step-sizes are selected using a relaxed Armijo condition, allowing the increase of the objectives between consecutive iterations. The degree of nonmonotonicity allowed for the objective $f_{i}(\,\cdot\,)$ is controlled by a nonnegative parameter $\left[\nu_{k}\right]_{i}$. Assuming that $\nabla f_{i}(\,\cdot\,)$ is $\theta_{i}$-H\"{o}lder continuous, and that $\left\{\left[\nu_{k}\right]_{i}\right\}_{k\geq 0}$ is summable for every $i\in\left\{1,\ldots,m\right\}$, we proved that our method takes no more than $\mathcal{O}\left(\epsilon^{-\left(1+\frac{1}{\theta_{\min}}\right)}\right)$ iterations to find a $\epsilon$-approximate critical point of a problem with $m$ objectives and $\theta_{\min}=\min_{i=1,\ldots,m}\left\{\theta_{i}\right\}$. Under the weaker assumption that $\lim_{k\to+\infty}\left[\nu_{k}\right]_{i}=0$ for every $i\in\left\{1,\ldots,m\right\}$, we also proved a liminf-type global convergence result. In particular, we showed that our complexity bound applies to some existing monotone and nonmonotone line search methods. In addition, exploring the generality of our assumptions about $\left\{\nu_{k}\right\}_{k\geq 0}\subset\mathbb{R}_{+}^{m}$, we proposed a new Metropolis-based nonmonotone method that fits into our general scheme. Our preliminary numerical results indicate that this new method performs favorably in comparison to the monotone projected gradient method by Drummond and Iusem \cite{Drummond}, the nonmonotone projected gradient method by Fazzio and Schuverdt \cite{FS}, and a projected variant of the nonmonotone method by Mita, Fukuda, and Yamashita \cite{MITA}, on problems where at least one objective has numerous non-global local minimizers.


\subsection*{Funding}

G.N. Grapiglia was partially supported by the National Council for Scientific and Technological Development (CNPq) - Brazil (grant 312777/2020-5).
M.E. Pinheiro was partially supported by the Coordination for the Improvement of Higher Education Personnel (CAPES) - Brazil.





\bibliographystyle{abbrv}
\bibliography{sn-bibliography}

\begin{thebibliography}{10}

\bibitem{5}
J.~Bello~Cruz, L.~Lucambio~Pérez, and J.~Melo.
\newblock Convergence of the projected gradient method for quasiconvex multiobjective optimization.
\newblock {\em Nonlinear Analysis}, 74:5268--5273, 2011.

\bibitem{1}
G.~Bento, J.~Cruz~Neto, and A.~Soubeyran.
\newblock A proximal point-type method for multicriteria optimization.
\newblock {\em Set-Valued and Variational Analysis}, 22:557--573, 09 2014.

\bibitem{4}
H.~Bonnel, A.~Iusem, and B.~Svaiter.
\newblock Proximal methods in vector optimization.
\newblock {\em SIAM Journal on Optimization}, 15:953--970, 01 2005.

\bibitem{mar}
L.~Calderón, M.~Diniz-Ehrhardt, and J.~Martínez.
\newblock On high-order model regularization for multiobjective optimization.
\newblock {\em Optimization Methods and Software}, 37:175--191, 2022.

\bibitem{9}
G.~Carrizo, P.~Lotito, and M.~Maciel.
\newblock Trust region globalization strategy for the nonconvex unconstrained multiobjective optimization problem.
\newblock {\em Mathematical Programming}, 159:339--369, 10 2016.

\bibitem{Cocchi}
G.~Cocchi, , and M.~Lapucci.
\newblock An augmented lagrangian algorithm for multi-objective optimization.
\newblock {\em Computational Optimization and Applications}, 77:29--56, 2020.

\bibitem{11}
V.~Coverstone-Carroll, J.~Hartmann, and W.~Mason.
\newblock Optimal multi-objective low-thrust spacecraft trajectories.
\newblock {\em Computer Methods in Applied Mechanics and Engineering}, 186:387--402, 06 2000.

\bibitem{12}
D.~Craft, T.~Halabi, H.~Shih, and T.~Bortfeld.
\newblock An approach for practical multiobjective {IMRT} treatment planning.
\newblock {\em International Journal of Radiation, Oncology, Biology, Physics}, 69:1600--1607, 01 2007.

\bibitem{MULTISEARCH}
A.~Custódio, J.~Madeira, A.~Vaz, and L.~Vicente.
\newblock Direct multisearch for multiobjective optimization.
\newblock {\em SIAM Journal on Optimization}, 21:1109--1140, 07 2011.

\bibitem{profile}
E.~Dolan and J.~Moré.
\newblock Benchmarking optimization software with performance profiles.
\newblock {\em Mathematical Programming}, 91:201--213, 03 2002.

\bibitem{Drummond}
L.~Drummond and A.~Iusem.
\newblock A projected gradient method for vector optimization problems.
\newblock {\em Computational Optimization and Applications}, 28:5--29, 04 2004.

\bibitem{DunningHuchetteLubin2017}
I.~Dunning, J.~Huchette, and M.~Lubin.
\newblock Jump: A modeling language for mathematical optimization.
\newblock {\em SIAM Review}, 59:295--320, 2017.

\bibitem{FS}
N.~Fazzio and M.~Schuverdt.
\newblock Convergence analysis of a nonmonotone projected gradient method for multiobjective optimization problems.
\newblock {\em Optimization Letters}, 13:1365--1379, 09 2019.

\bibitem{Fliege2}
J.~Fliege, L.~Drummond, and B.~Svaiter.
\newblock Newton's method for multiobjective optimization.
\newblock {\em SIAM Journal on Optimization}, 20:602--626, 2009.

\bibitem{Fliege}
J.~Fliege and B.~Svaiter.
\newblock Steepest descent methods for multicriteria optimization.
\newblock {\em Mathematical Methods of Operations Research}, 51:479--494, 2000.

\bibitem{Fliege3}
J.~Fliege, A.~Vaz, and L.~Vicente.
\newblock Complexity of gradient descent for multiobjective optimization.
\newblock {\em Optimization Methods and Software}, 34:949--959, 08 2019.

\bibitem{17}
D.~Frangopol and M.~Liu.
\newblock Maintenance and management of civil infraestructure based on condition, safety, optimization, and life-cycle cost.
\newblock {\em Structure and Infraestructure Engineering}, 3:29--41, 2007.

\bibitem{18}
E.~Fukuda and L.~Drummond.
\newblock On the convergence of the projected gradient method for vector optimization.
\newblock {\em Optimization}, 60:1009--1021, 12 2011.

\bibitem{19}
E.~Fukuda and L.~Drummond.
\newblock Inexact projected gradient method for vector optimization.
\newblock {\em Computational Optimization and Applications}, 54:473--493, 04 2013.

\bibitem{GeovaniSachs1}
G.~Grapiglia and E.~Sachs.
\newblock On the worst-case evaluation complexity of non-monotone line search algorithms.
\newblock {\em Computational Optimization and Applications}, 68:555--577, 07 2017.

\bibitem{GeovaniSachs}
G.~Grapiglia and E.~Sachs.
\newblock A generalized worst-case complexity analysis for non-monotone line searches.
\newblock {\em Numerical Algorithms}, 87:779--796, 06 2021.

\bibitem{24}
G.~Grapiglia, J.~Yuan, and Y.~Yuan.
\newblock On the convergence and worst-case complexity of trust-region and regularization methods for unconstrained optimization.
\newblock {\em Mathematical Programming}, 152:491--520, 07 2015.

\bibitem{Grippo}
L.~Grippo, F.~Lampariello, and S.~Lucidi.
\newblock A nonmonotone line search technique for newton's method.
\newblock {\em SIAM Journal on Numerical Analysis}, 23:707--716, 1986.

\bibitem{Guerraggio}
A.~Guerraggio and D.~Luc.
\newblock Optimality conditions for ${C}^{1,1}$ vector optimization problems.
\newblock {\em Journal of Optimization Theory and Applications}, 109:615--629, 06 2001.

\bibitem{31}
J.~Handl, D.~Kell, and J.~Knowles.
\newblock Multiobjective optimization in bioinformatics and computational biology.
\newblock {\em {IEEE/ACM} Transactions on Computational Biology and Bioinformatics}, 4:279--292, 05 2007.

\bibitem{32}
M.~Liu, S.~Burns, and Y.~Wen.
\newblock Multiobjective optimization for performance-based seismic design of stell moment frame structures.
\newblock {\em Earthquake Engineering and Structural Dynamics}, 34:289--306, 2005.

\bibitem{33}
R.~Marler and J.~Arora.
\newblock Survey of multiobjective optimization methods for engineering.
\newblock {\em Structural and Multidiciplinary Optmization}, 26:369--395, 2004.

\bibitem{36}
K.~Miettinen.
\newblock {\em Nonlinear multiobjective optimization}.
\newblock Kluwer Academic Publishers, Boston, USA, 1998.

\bibitem{MITA}
K.~Mita, E.~Fukuda, and N.~Yamashita.
\newblock Nonmonotone line searches for unconstrained multiobjective optimization problems.
\newblock {\em Journal of Global Optimization}, 75:63--90, 2019.

\bibitem{MORE}
J.~Mor\'e, B.~Garbow, and K.~Hillstrom.
\newblock Testing unconstrained optimization software.
\newblock {\em ACM Transactions on Mathematical Software}, 7:17--41, 1981.

\bibitem{Nesterov}
Y.~Nesterov.
\newblock Universal gradient methods for convex optimization problems.
\newblock {\em Mathematical Programming}, 152:381--404, 2015.

\bibitem{RUD}
L.~Rastrigin.
\newblock Extremal control systems.
\newblock {\em Theoretical Foundations of Engineering Cybernetics Series}, 1974.

\bibitem{ss}
E.~Sachs and S.~Sachs.
\newblock Nonmonotone line searches for optimization algorithms.
\newblock {\em Control and Cybernetics}, 40:1059--1075, 01 2011.

\bibitem{39}
G.~Schneider and U.~Fechner.
\newblock Computer-based \textit{de novo} design of drug-like molecules.
\newblock {\em Nature Reviews Drug Discovery}, 4:649--663, 2005.

\bibitem{40}
M.~Tavana.
\newblock A subjective assessment of alternative mission architectures for the human exploration of {Mars} at {NASA} using multicriteria decision making.
\newblock {\em Computers \& Operations Research}, 31:1147--1164, 06 2004.

\bibitem{Villacorta}
K.~Villacorta, P.~Oliveira, and A.~Soubeyran.
\newblock A trust-region method for unconstrained multiobjective problem with applications in satiscicing processes.
\newblock {\em Journal of Optimization Theory and Applications}, 160:865--889, 2014.

\bibitem{Maryam}
M.~Yashtini.
\newblock On the global convergence rate of the gradient descent method for functions with {Hölder} continuous gradients.
\newblock {\em Optimization Letters}, 10:1361--1370, 08 2016.

\bibitem{45}
Y.~Yu.
\newblock Multiobjective decision theory for computational optimization in radiation therapy.
\newblock {\em Medical Physics}, 24:1445--1454, 1997.

\bibitem{ZH}
H.~Zhang and W.~W. Hager.
\newblock A nonmonotone line search technique and its application to unconstrained optimization.
\newblock {\em SIAM J. Optim.}, 14:1043--1056, 2004.

\end{thebibliography}

\end{document}